\def\hpq0{h^{p,q}_{\leq 0}}
\def\Hpq0{\H_{\leq 0}^{p,q}}
\def\dbar{\bar\partial}
\def\R{{\mathbb R}}
\def\C{{\mathbb C}}
\def\H{{\mathcal H}}
\def\Re{{\rm Re\,  }}
\def\be{\begin{equation}}
\def\ee{\end{equation}}
\newtheorem{mainthm}{Theorem}
\newtheorem{lemma}{Lemma}[section]
\newtheorem{remark}{Remark}[section]
\newtheorem{maincor}[mainthm]{Corollary}
\newtheorem{cor}[mainthm]{Corollary}
\newtheorem{prop}[mainthm]{Proposition}
\theoremstyle{definition}
\theoremstyle{remark}
\newtheorem{preex}{Example}
\numberwithin{equation}{section}
\def\opcit{\underbar{\phantom{aaaaa}}}
\def\MA{Monge--Amp\`ere }
\def\dbar{\bar\partial}
\def\ra{\rightarrow}
\def\eps{\epsilon}
\newcommand{\RR}{\mathbb{R}}
\newcommand{\CC}{\mathbb{C}}
\newcommand{\DD}{\mathbb{D}}
\def\beq{\begin{equation}}
\def\eeq{\end{equation}}
\def\bi#1{\bibitem{#1}}
\def\PSH{\mathrm{PSH}}
\def\b#1{\bar{#1}}
\def\bclaim{\begin{claim}}
\def\eclaim{\end{claim}}
\def\bdefin{\begin{defin}}
\def\edefin{\end{defin}}
\def\bcor{\begin{cor}}
\def\ecor{\end{cor}}
\def\bthm{\begin{thm}}
\def\ethm{\end{thm}}
\def\blem{\begin{lem}}
\def\elem{\end{lem}}
\def\blemma{\begin{lem}}
\def\elemma{\end{lem}}
\def\bprop{\begin{prop}}
\def\eprop{\end{prop}}
\def\bremark{\begin{remark}}
\def\eremark{\end{remark}}
\def\bpf{\begin{proof}}
\def\epf{\end{proof}}
\def\beq{\begin{equation}}
\def\eeq{\end{equation}}
\def\beqno{\begin{equation*}}
\def\eeqno{\end{equation*}}
\def\eaeq{\end{aligned}}
\def\baeq{\begin{aligned}}
\def\l{\lambda}
\def\bal{\begin{aligned}}
\def\eal{\end{aligned}}
\def\pD{\partial\DD}
\def\tphi{{\tilde\phi}}
\def \cF {\mathcal{F}}
\def\eps{\varepsilon}
\newcommand{\upchi}{\raise1pt\hbox{$\chi$}}
\def\longto{\longrightarrow}
\def\benu{\begin{enumerate}}
\def\eenu{\end{enumerate}}
\newcommand{\AND}{\qquad {\rm and} \qquad}
\def \CC {\mathbb{C}}
\def \RR {\mathbb{R}}
\def \vphi {\varphi}
\def \Re {\hbox{\rm Re}\,  }
\begin{document}

\title[]
{Complex interpolation of $\R$-norms, duality and foliations}

\author[]{Bo Berndtsson, Dario Cordero-Erausquin, Bo'az Klartag and Yanir A. Rubinstein}

\begin{abstract}
The complex method of interpolation, going back
to Calder\'on and Coifman et al., on the one
hand, and the Alexander--Wermer--Slodkowski
theorem on polynomial hulls with convex fibers,
on the other hand, are generalized
to a method of interpolation of real (finite-dimensional) Banach spaces
and of convex functions.
The underlying duality in this method is given
by the Legendre transform. Our results can also be interpreted as new properties of solutions of the homogeneous complex Monge--Amp\`ere equation.
\end{abstract}

\maketitle

\tableofcontents

\section{Introduction and background on complex interpolation}

Is it possible to perform complex interpolation of real Banach spaces? What are the symmetries of the homogenous complex Monge--Amp\`ere equation?
In the present article, we will
show that these two questions have a partial common answer.

\medskip Arguably, it is a bit unreasonable to ask for complex interpolation of real Banach spaces;  moreover, why would one want to do such a thing? One motivation comes from the geometry of convex bodies. The classical Brunn-Minkowski theory deals with convex combinations $(1-t)K+tL$ of convex bodies, and this is also the operation that
real interpolation relies upon. But for several interesting geometric problems, one would like to consider  geometric means "$K^{1-t} L^t$" of bodies, in particular when it comes to duality (or rather polarity, for bodies). A crucial property of complex interpolation is that it commutes (in an isometric way) with duality of Banach spaces, and therefore, if we interpolate between a space and its (conjugate) dual, we find ``in the middle'' the Euclidean space. This property  can be used to recover some special cases of the Blaschke-Santal\'o inequality, for instance,
see Berndtsson \cite{Be} and Cordero-Erausquin \cite{ce}. Geometric means of bodies appear also in the context of the log-Brunn-Minkowski 
problem of B\"or\"oczky, Lutwak, Yang and Zhang \cite{BLYZ}.

\medskip Here we will not focus on the most important feature of complex interpolation, the ability to interpolate linear operators.
Rather,  {\it we seek for an interpolation procedure that will commute with duality.}
Before going on, let us recall what complex interpolation is, and let us defer the announcement of our new results to
Section \ref{sec_main}.
The complex method of interpolation, as introduced by
Calder\'on \cite{Calderon0,Calderon} and Lions \cite{Lions},
is originally a way to associate to a given pair of complex Banach spaces, $X_0$ and $X_1$, a family of
intermediate Banach spaces $X_\theta$, $\theta \in [0,1]$. This idea was later generalized by Coifman, Cwikel, Rochberg, Sagher, and Weiss~\cite{CCRSW, CCRSW0} to the following setting
(the situation for domains of dimension larger than two was also studied
in the literature, and is quite involved,
cf. Coifmann--Semmes \cite{CS}, and will not be developed here):
 for each $s$ on the boundary $\pD$ of the unit disk $\DD:=\{t\in\CC\,:\, |t|<1\}$
we are given a complex  Banach space $X_s$, and we construct interpolated spaces $X_t$ at any $t\in \DD$. This includes the classical interpolation in the following way. If we are given $\theta\in (0,1)$ and two Banach spaces $X$ and $Y$, then we put $X_s=X$ on an arc of $\pD$ of  length $2\pi(1-\theta)$ and $X_s=Y$ on the complement arc of length $2\pi\theta $; then the space $X_0$ that will be constructed at $0\in \DD$ will be the usual complex interpolated space between $X$ and $Y$ with  parameter $\theta$.

In this article we will
assume that all the spaces are of the same finite dimension, so that they can all be
identified as vector spaces with $\C^n$ (while the infinite-dimensional case entails
considerable additional technical difficulties, the $\CC^n$ case
already contains many of the key ideas). We will also agree that an \emph{$\R$-norm} refers to a norm on the $\R$-vector space $\R^{2n}=\C^n$, whereas a \emph{$\C$-norm}, or simply a norm,  refers to a norm on the $\C$-vector space $\C^n$. Therefore $\|\cdot\|$ is a $\C$-norm if and only if it is an $\R$-norm with the extra property that $\| e^{i\theta}w\|=\|w\|$ for every $w\in \C^n$ and $\theta\in \R$, or equivalently
\begin{equation}\label{Csymmetry}
\| \lambda w\|=|\lambda | \|w\|, \qquad \forall w\in \C^n, \forall \lambda \in \C.
\end{equation}
Back to interpolation, since as a linear space $X_s$  is always $\C^n$, the only thing that varies with the parameter $s\in \partial \DD$ is therefore
the norm, which we denote by $\|\cdot\|_s$. Besides measurability of the family of norms (i.e., $(s, w) \to \|w\|_s$ is measurable on $\pD \times \CC^n$), some weak integrability is usually required. To avoid technicalities let us assume the strong (but not too restrictive) property that  the norms are (uniformly) equivalent, i.e.,  there exists $c,C>0$ such that $c |w| \le \|w\|_s \le C |w|$ for all $w\in \C^n$ and $s\in \pD$, where $|\cdot|$ stands for the Euclidean/Hermitian norm on $\R^{2n}=\C^n$. Complex interpolation then produces,
for each $t$ inside the disk $\DD$, a norm
which we denote in the same way by $\| \cdot \|_t$,
and has the given norms as boundary values. This norm is defined as follows:
  for $w_0\in \C$ and $t_0\in \DD$,
\begin{multline}\label{definterpolation}
\|w_0\|_{t_0} := \inf\Big\{ {\rm ess} \sup_{s\in \partial \DD} \|f(s)\|_s \; : \\  f: \DD \to \C^n \textrm{ is bounded and holomorophic},
\textrm{with $f(t_0)=w_0$}\Big\}.
\end{multline}
Here we will always use the notation  $f(s) = \lim_{r \rightarrow 1^-} f(rs)$ for the radial boundary values of the function $f$ at (almost-all) $s \in \pD$.
In this framework, the duality theorem for complex interpolation takes a very nice form. Here, the dual norm $\|\cdot\|^\star$ to an $\R$ or $\C$ norm  $\|\cdot\|$  refers to the norm defined by
\begin{equation}\label{dualnorm}
\|z\|^\star := \sup_{\|w\|\le 1} \Re(z \cdot w)
\end{equation}
where $z \cdot w = \sum z_j w_j$. 
We favour the notation  $\star$ for the dual norm and dual space, not to confuse it with $\ast$ which will stand for the Legendre transform. 
In the case where $\|\cdot\|$ is a $\C$-norm, this coincides with the usual complex dual, i.e.
$$\|z\|^\star = \sup_{\|w\|\le 1} |z \cdot w|.$$
In all cases, our notion of dual corresponds rather to the \emph{conjugate} dual because we use $z\cdot w$ rather than $\overline z \cdot w$. But this notion is better adapted to interpolation, so we will adopt it here, keeping in mind that our dual space is traditionally called the conjugate dual space. The duality theorem for complex interpolation expresses the following remarkable property:  if we take as boundary norms $N_s := \|\cdot\|^\star_s$,  where $\|\cdot\|_s$ are as before $\C$-norms, then we have for every $t\in \DD$,
$N_t = \|\cdot \|_t^\star $. In other words, the interpolated norm of the dual norms, is the dual norm of the interpolated norms	.

\medskip Formally, we can use formula~\eqref{definterpolation} even in the case where the given norms $\|\cdot\|_s$ are only $\R$-norms, but the rest of the article will show that there are better choices (in particular, the duality theorem we just mentioned would fail). Before presenting other choices, let us fix some notation regarding harmonic extension. We will denote by $P[h]$, or with some convenient abuse of notation $P[h(s)]$, the Poisson extension to $\DD$ of a function $h\in L^1(\partial \DD):=L^1(\partial \DD, \sigma)$, where $d\sigma(s) = \frac{ds}{2\pi}$ is the usual probability measure on the circle;   namely
\beq\label{HarmMeasEq}
P[h](t):= P[h(s)](t):=\int_{\pD}P(t,s) \, h(s)\, d\sigma(s) \quad \textrm{ where }  P(t,s) := \frac{1-|t|^2}{|s-t|^2}.
\eeq
The function $P[h]$ is harmonic in $\DD$. When needed, we will also denote by $P[L^r]$ the space of all harmonic functions of the form $P[f]$ where $f \in L^r(\pD, \CC)$, for $r \geq 1$.
It is well-known that the radial boundary values of $P[f]$ exist almost everywhere in $\pD$, and equal to the function $f$ itself
(see, e.g., Katznelson \cite[Section I.3.3]{katz}). Recall also that the Hardy space $ H^r(\DD, \CC) $
is the subspace of all holomorphic functions in $P[L^r]$, where
we identify a function in $P[L^r]$ with its boundary values.  Then, a reasonable definition for ``interpolation'' of $\R$-norms $\|\cdot\|_s$ would be
 \begin{equation}\label{definterpolation2}
\|w_0\|_{t_0} := \inf\Big\{ \sqrt{P\big[\|f(s)\|_s^2 \big](t_0)} \; :   f: \DD \to \C^n \textrm{ with } f\in H^2(\DD, \CC^n)
\textrm{ and } f(t_0)=w_0\Big\}.
\end{equation}
In the present article, $ H^p(\DD, \C^n) = H^p(\DD, \CC) \otimes \CC^n$ is the  usual Hardy space (of index $p$) of holomorphic functions on the disc $\DD$ with values on $\C^n$ (note that the holomorphic functions in our article will have values on $\C^n$).

\medskip In the case of $\C$-norms, the two definitions~\eqref{definterpolation} and~\eqref{definterpolation2} coincide. This crucial fact relies on the following observation, the proof of which will be recalled later in section \ref{sec_last}. Select $t_0\in \DD$ and $w_0\in \C^n$, with $\|w_0\|_{t_0}=1$, say. Then it is possible to find a bounded,
 holomorphic function $f: \DD \to \C^n$ with $f(t_0)=w_0$ such that for almost any $s \in \partial \DD$,
\beq\label{trivialfoliation}
 \|f(s)\|_s = 1.
\eeq
The construction of such $f$ uses, as we shall see,   the property~\eqref{Csymmetry} of $\C$-norms. This provides a ``foliation'' by holomorphic discs for the function $(t,w)\to \|w\|_t$ so that the function is constant along each leaf. It follows that whatever kind of ``average'' we take of the boundary values $\|f(s)\|_s$ along a leaf, we will get the same quantity, and in particular~\eqref{definterpolation} and~\eqref{definterpolation2} coincide.

\medskip
The situation for $\R$-norms is drastically different, as it is not always possible to construct a function $f$ satisfying~\eqref{trivialfoliation}, and as a matter of fact~\eqref{definterpolation} and~\eqref{definterpolation2} would no longer give the same extension.  However, with the choice of~\eqref{definterpolation2}, we will see that the duality principle still holds,
and moreover that a slightly different foliation still exists.
The duality principle has then to be understood in terms of Legendre's transform of convex functions, as we will see.
In fact, we will see that what is meaningful in the case of $\R$-norms, is to consider ``interpolation'' of powers of norms, $\|\cdot\|_s^p$, for $p\in [1, +\infty)$. Unlike the case of $\C$-norms, this leads to different interpolants for different $p$'s.

\medskip
Foliations  connect
complex interpolation to the homogenous complex Monge--Amp\`ere equation (HCMA).
Let us set $\phi(t,w) := \|w\|_t$ where $\phi:\overline \DD\times \C^n \to \R$. We view  $\phi$ as a function of $(n+1)$ variables $(w_0, w_1, \ldots, w_n)$ on  $\overline \Omega$ where
$$\Omega:=\DD\times \C^n\subset \C^{n+1}=\{(w_0,w) \; ; w_0\in \C , w\in \C^n\}$$
with  $\phi(s,w)=\|w\|_s$ on $\partial \Omega= \pD\times \C^n$  given by a family of $\C$-norms, extended to $\Omega$ by~\eqref{definterpolation}. It turns out that $\phi$ is pluri-subharmonic ($\PSH$). This classical fact is not obvious from the definition~\eqref{definterpolation}, but it follows from the interpolation duality theorem recalled above.  Then, maybe in some weak sense, we have the homogenous complex Monge--Amp\`ere equation (HCMA),
\beq\label{MAnorm}
\det \nabla^2_{\CC} \phi = \det\Big(\frac{\partial^2 \phi}{\partial w_j \partial \overline{w_k}}\Big)_{0\le j,k\le n}=0 \qquad \textrm{ on } \Omega.
\eeq
The reason is indeed the existence of the foliation~\eqref{trivialfoliation}. For a fixed $(t_0,w_0)\in\Omega$ and $f$ as in~\eqref{trivialfoliation}, if we introduce the holomorphic function $\alpha:\DD\to \Omega$, $\alpha(z)=(z,f(z))$, then $z\to \phi(\alpha(z))$ is harmonic, since it is constant (!), and so its Laplacian vanishes. So at $t_0$ we have (in some weak sense) that $(\nabla^2_{\CC} \phi ) \frac{\partial \alpha}{\partial z} \cdot \overline{\frac{\partial \alpha}{\partial z}} = 0$, which in turn implies that
the vector $\frac{\partial \alpha}{\partial z} \in \CC^{n+1}$ lies in the kernel of the matrix $\nabla^2_{\CC} \phi$, since the matrix is nonnegative, and hence ~\eqref{MAnorm} holds.

\medskip
Let us discuss an example, the case where we are given multiples of the Hilbert norm $|\cdot|$ on $\C^n$ (take $n=1$, say), that is
$
\|w\|_s= |w| e^{-u(s)},
$ for $w\in \C^n$ and $s\in \pD$
where the real-valued function $u$ is bounded, say, on $\pD$.
Then, for given $t_0\in\DD$, using the holomorphic function $f(t) = w \, e^{H(t)-H(t_0)}$, where $H$ is the
holomorphic function whose boundary values have real part equal to $u$, it is easy to see from definition~\eqref{definterpolation} that the interpolating norms inside the disk are  given by
$$
\|w\|_{t_0}= |w| e^{-P[u](t_0)} = | w e^{-H(t_0)}|.$$
In particular $t \to (t, f(t))$ provides a holomorphic disc  through $(t_0,z)$ with $t\to \|f(t)\|_t$ constant, equal to $\|z\|_{t_0}$.
More generally, if the boundary norms are Hilbert norms on $\CC^n$,
$$
\|z\|^2_s:=\sum a_{jk}(s)z_j\bar z_k,
$$
the interpolating norms are still Hilbert norms. Complex interpolation is therefore a method to extend the matrix valued function $A(s)=[a_{jk}(s)]$ defined on the boundary of the disk to its interior, and in the terminology of  hermitian  metrics on  the  vector bundle $\DD\times \C^n$, this is done so that the curvature of the extension vanishes, i.e.,
\beq \label{matrix}
\Theta^A:= \dbar (A^{-1}\partial A)=0.
\eeq
The vanishing of the curvature means that the hermitian metric defined by $A$ is holomorphically   equivalent to the standard Euclidean metric. In other words, there exists a matrix valued function $M(t)$, holomorphic in $t \in  \DD$, such that $A = M^* M$ throughout the disc. The basic interpolation theorem in this setting is due to Wiener and Masani, see \cite{Wiener-Masani}.

\medskip We proceed with a somewhat dual point of view in interpolation theory. Let us regard the norms as being defined by their unit balls
$$
B_s=\{z\in \C^n; \|z\|_s\leq 1\}.
$$
The definition (\ref{definterpolation}) of the complex norm $\|\cdot\|_t$ is equivalent
to the following definition in terms of the unit balls $B_t$ of the norm: For any $t \in \DD$,
\be\label{BtEq}
B_t =\{ h(t)\; ; \  h\in H^{\infty}(\DD, \C^n),  h(s)\in B_{s}
\quad\text{for almost all} \, s\in \pD\}.
\ee
By using a polar body (or dual norms) and the duality theorem,  we can view this unit ball $B_t$ as some {\it affine hull}: For $t \in \DD$,
$$B_t:=\{ w \in \C^n \; ;   \;   \forall f\in H^{\infty}(\DD, \C^n),   \; \Re(w \cdot f(t) ) \le \sup_{s\in \partial \DD, \|z\|_s \le 1}   \Re(z \cdot f(s) ) \}.$$
In fact, at least in the strictly-convex
case, a foliation for the norms $\| \cdot \|_t$ induces a foliation for the dual norms $\| \cdot \|_t^*$,
and constructing foliations for such hulls is closely related to the duality theorem for interpolation (this follows from the work of \cite{CCRSW} but also from
Corollaries 4 and 6 below).

\medskip
An interesting result regarding foliation for hulls, that requires only convexity and not $\C$-symmetry, is the Alexander--Wermer--Slodkowski theorem (AWS  theorem) \cite{AW,slodki86}
which we now describe.
Given a compact set $K\subset \pD\times\CC^n$
whose fibers $K_s=\{ z \in \CC^n \, : \, (s, z)\in K\}$ are {\it convex sets} in $\CC^n$, the AWS theorem stipulates that $\hat K$,
the polynomial hull of $K$,
\beq\label{KPEq}
\hat K:=\{(t,z)\in \bar\DD\times\C^n\,:\, |P(t,z)|\leq\sup_K |P|, \,
\text{for all holomorphic polynomials}\, P\},
\eeq
consists of all analytic discs whose boundary (in an a.e. sense) lies in $K$. In other words, for any $(t_0, z_0)\in \hat K$, there exists a bounded holomorphic function $f:\DD\to \C^n$ such that $f(t_0)=z_0$ and
$f(s)\in K_s$  for a.e. $s\in \pD$ (so the analytic disc through $(t_0, z_0)$ is   given by $t\to (t,f(t))$). Note that such $f$ satisfies $f(t) \in \hat{K}_t =\{(t, \cdot) \in \hat K\}$, since for every polynomial $P$, the function $t\to |P(t,f(t))|$ is subharmonic.

\medskip
Connections between the AWS theorem and classical complex interpolation  have been already put forward by Slodkowski in~\cite{slodki86, slodki}. In particular, in~\cite{slodki} Slodkowski studied duality and was able to reproduce the duality theorem in the case of $\C$-norms. But despite some partial results, the picture  remained incomplete when it came to duality for $\R$-norms. Since the AWS theorem assumed only convexity of the fibers, further relations should be revealed between interpolation and duality even if we work with $\R$-norms. To this end, we start with three new observations:
First, that in the AWS theorem, we can use only polynomials that are {\it linear} in $z$, namely polynomials of the form $(t,z) \to z\cdot \mathcal P(t) + Q(t)$ where $\mathcal P$ and $Q$ are polynomials from $\C$ to $\C^n$ and $\C$, respectively. Second, when we work with $\R$-norms, the passage from norms or power of norms to bodies is less convenient as explained above, in particular the duality theorem needs to be modified. The right concept for duality will be the Legendre transform. Here, the Legendre transform of a convex function $\varphi: \C^n \to \R\cup\{+\infty\}$ is defined by
$$\varphi^\ast(z) := \sup_{w\in \C^n}\big\{ \Re(w\cdot z) - \varphi(w)\big\}.$$
With respect to the usual definition, there is a conjugation missing, so we are really working with the conjugate Legendre transform. The third observation is that
the homogeneity of the norm plays a minor r\^ole, and that the class of convex functions, which are not necessarily $\RR$-homogenous, is suitable  in our context.

\medskip Our conjugate Legendre transform is order-reversing, i.e., if $\vphi_1 \leq \vphi_2$ then $\vphi_2^* \leq \vphi_1^*$. Since $\vphi(z) + \vphi^*(\bar{z}) \geq |z|^2$ for all $z$,
the unique function $\vphi$ satisfying $\vphi^*(\bar{z}) = \vphi(z)$ is the function $ \vphi(z) = |z|^2 / 2$ on $\CC^n$.
From the point of view of the Monge--Amp\`ere equation,
our duality theorem below states that a weak solution $\phi(t,z) \ (t \in \DD, z \in \CC^n)$ of the HCMA
equation (\ref{MAnorm}) which is convex in $z$ is transformed to another such solution by an application of the Legendre transforms in the $z$-variables.
In a separate article, we  emphasize this point of view, of the Legendre transform as a ``symmetry'' of the HCMA
equation around the fixed point $$ I(z) = |z|^2/2 \qquad \qquad \qquad (z \in \CC^n). $$
%
Moreover, in that article we  show that a real analytic PSH function $\omega$ defines a local
$\omega$-Legendre transform, essentially by replacing the expression $\Re w\cdot z$ by the so-called {\it Calabi diastasis function} defined by $\omega$.
The $\omega$-Legendre transform is defined on an open neighbourhood of $\omega$ and it has $\omega$ as a fixed point. Moreover, by using the work of Lempert  \cite{Lempert} we can show that the $\omega$-Legendre transform is again a ``symmetry'' of the HCMA equation,
since it transforms a solution of the HCMA equation to another solution of the same equation.
These additional symmetries render the space of PSH functions, or rather, the space of K\"ahler metrics, a locally-symmetric space
with respect to the Mabuchi metric. Explanations and details in \cite{never}.

\medskip It is now time to move to the new results of this article, which we present in the next section.
Subsequent sections are devoted mostly to the proof of these results.
It is a pleasure to acknowledge that in addition to the Alexander-Wermer-Slodkowski theorem, our point of view in this article is greatly influenced by the works of Rochberg \cite{Rochberg}
and Semmes \cite{Semmes1988, S}.
Moreover, we thank the referees for their careful reading
and many insightful comments. In particular, we learned from them
about the somewhat morally related work of Royden--Wong \cite{RW}
who used dual extremal problems in complex analysis 
to study the Kobayashi metric in convex domains,
(and by Slodkowski \cite[\S 2]{slodki} the problem
of complex geodesics they studied can be related to 
an interpolation problem of the AWS type). 
The work of Poletsky (\cite{Poletsky}) on disk functionals is also related to our work, in particular to the identity between  the functions $\check\phi$ and $\hat\phi$ defined in the next section. The difference is that whereas Poletsky considers Poisson integrals of functions $\phi\circ f$ where $\phi$ is defined in the whole domain (in our case $\DD\times\C^n$)  and $f$ is an arbitrary analytic disk in the domain, we consider only disks that are graphs, with boundary in the boundary of the domain, and $\phi$  defined only on the boundary.


\section{Main results}
\label{sec_main}

Our framework allows us to encompass boundary data more general  than power of norms. We assume that we are given a Borel measurable function $\phi:\pD\times \C^n\to \R \cup \{ + \infty \}$
which is {\it fiberwise convex}, i.e., for every $s\in \pD$,  the function
$$\phi_s(\cdot) := \phi(s, \cdot ):\C^n\to \R \cup \{ + \infty \} $$
 is \emph{convex} on $\C^n$. We allow $\phi$ to attain the value $+\infty$, though it is usually not needed.
 The convexity of $\phi_s$ means that the set $\{ z \in \CC^n \, ; \, \phi_s(z) < \infty \}$ is convex, $\phi_s$ is a finite, convex
 function on this set.  For simplicity, in this article we shall make the technical assumption of uniform growth.

For $p \in (1, +\infty]$ we say that the boundary data $\phi:\pD\times \C^n\to \R$  satisfies {\it $p$-uniform growth conditions}, if for some $c, C, A > 0$,
 \beq\label{condnorms}
 \forall s \in \pD, \forall w\in \C^n, \qquad   \left \{ \begin{array}{llll} c |w|^p - A \le  \phi_s(w)  \le C |w|^p + A & & \text{when}& p \in (1, +\infty), \\
\chi(c |w|) - A \leq  \phi_s(w)  \leq \chi(C |w|) + A & &\text{when} & p = +\infty,
 \end{array} \right.  \eeq
where $\chi(t) = 0$ for $t \leq 1$ and $\chi(t) = +\infty$ for $t > 1$. In the case where $p = +\infty$, we also require that $\phi_s$ is lower-semi continuous
for any $s \in \pD$.
The case $p = +\infty$ appears less exceptional from the point of view of the {\it fiberwise Legendre transform}.
For fixed $s$, we shall agree to use the following equivalent notations for the fiberwise Legendre transform:
$$\phi^\ast(s, z):=(\phi^\ast)_s(z) := \phi^\ast_s(z)=(\phi_s)^\ast(z)= \sup_{w \in \C^n} \big\{\Re(z\cdot w) - \phi(s, w)\big\}. $$
We emphasize that throughout the article the duality is only over $\CC^n$, with the parameter $s\in \pD$ or later $t\in \DD$ being fixed. By standard convex analysis (see, e.g., \cite[Theorem 12.2]{Rockafellar}),
whenever $\phi$ is a fiberwise convex function satisfying $p$-uniform growth conditions,
$$ ( \phi^{\ast} )^{\ast} = \phi. $$
Denoting $q = p / (p-1) \in [1, \infty)$, we may express the uniform
growth condition (\ref{condnorms}) equivalently as follows: For some $\tilde{c}, \tilde{C}, A > 0$,
\beq\label{condnormsdual}
 \forall s \in \partial \DD, \forall z\in \C^n, \qquad \tilde c  |z|^q - A\le \phi^\ast_s( z) \le \tilde C  |z|^q + A.
\eeq
The typical example of boundary data verifying a $p$-uniform growth condition will be  powers of norms, $\phi(s, w) = \|w\|_s^p
$, where $\{\| \cdot \|_s\}_{s\in \pD}$ are a family of (uniformly equivalent) $\R$-norms on $\C^n$.

So we are given our boundary data $\phi:\pD \times \C\to \R \cup \{ + \infty \}$ with convex fibers
satisfying $p$-uniform growth conditions, or equivalently a family of convex function $\{\phi_s\}_{s\in \pD}$ on $\C^n$, and we would like to extend it to $\DD\times \C^n$, i.e., we would like to define $\phi_t$ for $t\in \DD$.

This is a Dirichlet problem, and from the point of view of complex analysis and HCMA, a natural procedure is to construct the largest possible PSH function which
does not exceed $\phi$ on the boundary, which is known as the
upper (or Perron-Bremermann) envelope. Let us first define, for $(t,z) \in \DD \times \CC^n$,
\begin{equation} \label{def_mu} \mu(t,z) = \mu(z) = \sup_{s \in \pD} \phi(s, z). \end{equation}
The reason for introducing $\mu$ is to impose some mild growth conditions on the PSH solution, as we are working on unbounded domains. Other choices would be possible; here we will just ask
that no interpolant of the family of functions $\{ \phi_s \}_{s \in \pD}$ should be greater than the supremum
of this family of functions (we can replace $\mu(z)$ by $C |z|^p+A$ when $\phi$ satisfies a $p$-uniform growth condition, but it is not natural for the definition below to depend on $p$). So we define for $t\in \DD$ and $w\in \C^n$,
\begin{multline*}
 \hat\phi(t, w) := \sup\{\psi(t,w) \; ;\\
   \psi :\DD \times \C^n \to \R \; \textrm{ is } \PSH, \ \psi \leq \mu \textrm{ on } \DD\times\CC^n \textrm{ and }  \psi \preceq \phi \textrm{ on } \pD \times \CC^n \}.
\end{multline*}
When we write that $\psi \preceq \phi$ on $\pD \times \CC^n$, we mean the following: For almost any $s \in \pD$ and for any $z \in \CC^n$,
$$ \limsup_{(0,1) \times \CC^n \ni (r, w) \rightarrow (1, z)} \psi(r s, w) \leq \phi(s, z). $$
It is known that this
upper envelope $\hat{\phi}$ may be interpreted as a weak  solution of the HCMA equation ~\eqref{MAnorm}, see for instance~\cite{BT,S}.

Another possibility, if one is rather guided by complex interpolation, is to introduce  for $t\in \DD$ and $w\in \C^n$,
\beq\label{def2}
\check \phi(t,w) := \inf\{ P[\phi(s, f(s))](t) \; :\ f \in H^p(\DD, \CC^n) \textrm{ with } f(t)=w\}.
\eeq
Note that  assumption~\eqref{condnorms} ensures that the function $\phi_s(f(s))$ is in $L^1(\pD)$, and hence its Poisson integral is well-defined.  For simplicity, we shall often adopt the following notation: for $p\in [1, +\infty]$,
$$ \vec{H}^p:=H^p(\DD, \C^n), \qquad \vec{L^p}:= L^p(\pD, \C^n)
\AND L^p := L^p (\pD, \CC). $$
As mentioned, the holomorphic functions we work with  have values in $\C^n$.

Yet, a third possibility is to have in mind the duality theorem for complex interpolation and the AWS theorem, and to introduce a polynomial-like hull.  For this hull we will use functions of the form
\beq\label{defL}
 L_{f_0,f}(t,w):= \Re (f(t) \cdot w)  -  P[\Re f_0](t), \qquad t\in  \DD,   \, w\in \C^n,
 \eeq
associated to $f \in \vec{H}^1$ and $f_0 \in L^1$.
The minus sign is there for cosmetic reasons, only (so it evokes the Legendre transform). The function  $L_{f_0,f}$ is harmonic in $t$ and linear in $w$, and by construction it is PSH on $\DD\times \C^n$. As is customary, we identify between a function in $\vec{H}^p$ and its boundary values in $L^p(\partial \DD, \CC^n)$.
Denoting as always $q=p/(p-1)$, we now introduce for $t\in \DD$ and $w\in \C^n$,
\begin{eqnarray*}
\tphi(t,w) &:=&\sup\big\{ L_{f_0,f}(t,w) \,;\  (f_0,f)\in L^1 \times \vec{H}^q,   \;
L_{f_0,f} \preceq  \phi  \ \textrm{on }  \pD \times \C^n \big\} \\
&=&\sup\big\{ L_{f_0,f}(t,w) \,;\  (f_0,f)\in L^1\times \vec{H}^q,   \;
\phi^\ast_s(f(s))\le  \Re f_0(s)  \ \textrm{for a.e. }  s \in \pD  \big\} \\
& = &\sup_{f\in \vec{H}^q}  \Re( f(t) \cdot w) -  P[\phi_s^\ast(f(s))](t),
\end{eqnarray*}
where we used the definition of the Legendre's transform of $\phi_s$ to pass from the first to the second line, and the monotonicity of the Poisson extension to get the third line. Note that the assumption on $\phi$, in the form~\eqref{condnormsdual}, ensures that the function $s\to \phi^\ast_s(f(s))$ belongs to $L^1$ when $f\in \vec{H}^q$.
Observe that by construction, the fibers $\tphi_t:=\tphi(t, \cdot)$ are convex on $\C^n$ for $t\in \DD$ fixed, thus $\tphi$ is a fiberwise convex function. Also by monotonicity it is clear
that $\tphi_t$ satisfies the $p$-uniform growth conditions, i.e., the inequalities in (\ref{condnorms}) hold for all $(s,z) \in \DD \times \CC^n$.

It follows easily  from the three definitions above (up to technical details that will be discussed later) that
\begin{equation}\label{easydirection}
\tphi\le \hat\phi \le \check\phi, \qquad \text{ on }\  \DD\times \C^n.
\end{equation}

The following theorem is, in our opinion, the key
to the understanding of our interpolation. It establishes the existence of a holomorphic disc, passing through a given point
in $\DD \times \CC^n$, such that $\tphi$ is harmonic along this disc.

\begin{mainthm}\label{mainthm}
Let $p\in (1, +\infty]$ and let $\{\phi_s\}_{s\in \pD}$ be a measurable family of convex functions
 satisfying $p$-uniform growth conditions~\eqref{condnorms}. Then for every $(t_0,w_0)\in \DD \times \C^n$ for which $\tphi(t_0, w_0) < \infty$
  there exists a holomorphic function $F\in \vec{H}^p=H^p(\DD, \C^n)$  with $F(t_0) = w_0$ such that
$$ \tphi(t, F(t)) = P \left[ \phi(s, F(s)) \right ](t) \qquad \qquad (t \in \DD). $$
In particular, $t \to \tphi(t, F(t))$ is harmonic in $\DD$.
\end{mainthm}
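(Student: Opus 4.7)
The plan is to produce a matched pair $F \in \vec{H}^p$ and $f \in \vec{H}^q$ (with $q = p/(p-1)$) such that $F(t_0) = w_0$ and the fiberwise Legendre equality $\phi_s(F(s)) + \phi^\ast_s(f(s)) = \Re(F(s) \cdot f(s))$ holds for a.e.\ $s \in \pD$; the conclusion of the theorem then follows from Hardy-space arithmetic. Indeed, since $f \in \vec{H}^q$ and $F \in \vec{H}^p$, H\"older's inequality gives $f \cdot F \in H^1(\DD, \CC)$, so $\Re(f(t) \cdot F(t)) = P[\Re(f(s) \cdot F(s))](t)$ on $\DD$; applying $P[\cdot](t)$ to the boundary Legendre identity yields $P[\phi(s, F(s))](t) = \Re(f(t) \cdot F(t)) - P[\phi^\ast_s(f(s))](t) \le \tphi(t, F(t))$ by the supremum defining $\tphi$. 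The opposite inequality $\tphi(t, F(t)) \le P[\phi(s, F(s))](t)$ is the trivial direction \eqref{easydirection} applied at $(t, F(t))$, giving equality throughout $\DD$.

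To produce such a pair, I would first extract an optimal $f \in \vec{H}^q$ for the supremum defining $\tphi(t_0, w_0)$. By \eqref{condnormsdual}, any maximizing sequence is bounded in $\vec{L}^q$, hence in $\vec{H}^q$. For $1 < q < +\infty$ reflexivity delivers a weak limit $f \in \vec{H}^q$, and weak lower semicontinuity of the convex functional $f \mapsto P[\phi^\ast_s(f(s))](t_0)$ together with weak continuity of point evaluation at $t_0$ forces $f$ to be a maximizer; for $q = 1$ (i.e.\ $p = +\infty$) one replaces reflexivity by weak-$\ast$ compactness in an ambient space of measures and discards the singular part. Perturbing $f \mapsto f + \epsilon G$ with $G \in \vec{H}^q$ arbitrary, convexity of $\epsilon \mapsto P[\phi^\ast_s((f+\epsilon G)(s))](t_0)$ and dominated convergence under the growth control allow differentiation under the Poisson integral; a measurable-selection argument for the multifunction $s \mapsto \partial \phi^\ast_s(f(s))$ then produces $\eta \in \vec{L}^p$ with $\eta(s) \in \partial \phi^\ast_s(f(s))$ a.e.\ satisfying
\[
\int_\pD \Re\bigl(\eta(s) \cdot G(s)\bigr)\, P(t_0, s)\, d\sigma(s) \;=\; \Re\bigl(w_0 \cdot G(t_0)\bigr) \qquad \text{for every } G \in \vec{H}^q.
\]

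Rewriting the right-hand side as $\int \Re(w_0 \cdot G(s)) P(t_0, s)\, d\sigma$, the identity becomes $\int \Re\bigl(G \cdot (\eta - w_0) P(t_0, \cdot)\bigr)\, d\sigma = 0$ for all $G \in \vec{H}^q$, i.e., $(\eta - w_0) P(t_0, \cdot) \in \vec{L}^p$ annihilates $\vec{H}^q$ under the real pairing. A direct Fourier computation identifies this annihilator as $s \cdot \vec{H}^p$, and the elementary boundary identity $s / P(t_0, s) = (1 - \bar t_0 s)(s - t_0)/(1 - |t_0|^2)$ on $\pD$ then lets me solve explicitly $\eta(s) = w_0 + \bigl[(1 - \bar t_0 s)(s - t_0)/(1 - |t_0|^2)\bigr] k(s)$ for some $k \in \vec{H}^p$. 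Since the bracketed polynomial is holomorphic in $s$ and vanishes at $s = t_0$, the right-hand side extends to a holomorphic function $F \in \vec{H}^p$ with $F(t_0) = w_0$ automatically; by construction its boundary values coincide with $\eta(s) \in \partial \phi^\ast_s(f(s))$ a.e., which is exactly the Legendre equality sought. The main obstacle I anticipate is the combined measurable-selection/orthogonality step: producing a single boundary trace $\eta$ (amid the set-valued subdifferential of the possibly non-smooth $\phi^\ast_s$) which satisfies the precise holomorphicity-compatible orthogonality, and separately treating the edge case $p = +\infty$, $q = 1$ where $\vec{H}^1$ is non-reflexive and the compactness argument must be refined.
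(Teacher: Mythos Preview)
Your approach is sound and takes a genuinely different route from the paper's. The paper does not first extract an optimal $f\in\vec{H}^q$; instead it applies Hahn--Banach separation directly in $L^1\times\vec{L}^q$, separating the open convex set $U=\{(g_0,g):\int\phi^*_s(g(s))\,d\sigma<\int\Re g_0\,d\sigma\}$ from the affine subspace $E=\{(f_0,f)\in L^1\times\vec{H}^q: L_{f_0,f}(t_0,w_0)=\tphi(t_0,w_0)\}$, and reads $F\in\vec{H}^p$ off the separating functional. This yields only the one-point inequality $\int\phi(s,F(s))\,d\sigma\le\tphi(0,w_0)$, which is then upgraded to harmonicity on all of $\DD$ by a short maximum-principle lemma about suprema of harmonic functions.

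Your variational route has the advantage of producing the matched pair $(F,f)$ with the fiberwise Legendre equality on $\pD$, so harmonicity on all of $\DD$ drops out immediately without that auxiliary lemma. The two arguments are convex-duality cousins: the step you label ``measurable selection'' is really the Rockafellar characterization of subdifferentials of integral functionals combined with the sum rule $\partial(I+\iota_{\vec{H}^q})=\partial I+(\vec{H}^q)^\perp$ (valid here since $I(g)=P[\phi^*_s(g(s))](t_0)$ is continuous on $\vec{L}^q$ by the growth bound), and that package has Hahn--Banach inside it. So the same separation content is present, just organized differently. The paper's formulation has the practical advantage of treating $p=+\infty$ uniformly, since it never needs a maximizer in the non-reflexive $\vec{H}^1$; your weak-$*$/F.\&M.~Riesz workaround for that endpoint is feasible but, as you note, requires additional care.
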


The condition that $\tphi(t_0, w_0) < \infty$ is relevant only in the case where $p = +\infty$,
as $\tphi$ is automatically finite when $p \in (0, +\infty)$.
We view Theorem \ref{mainthm}
as a functional version of the AWS theorem. In fact, the case $p = +\infty$ is
a generalization of the linear hull variant of the AWS theorem, as our boundary data is assumed fiberwise-convex and measurable,
but no global compactness assumption is made. The following proposition is an addendum to Theorem \ref{mainthm}.
We say that a convex function $\psi: \CC^n \rightarrow \RR \cup \{+ \infty \}$ is strictly convex if $\psi( (z+w) / 2 ) < (\psi(z) + \psi(w))/2$
for any distinct points $z,w \in \CC^n$ for which $\psi(z)$ and $\psi(w)$ are finite.

\begin{prop}[``foliation''] We work under the notation and assumptions of Theorem \ref{mainthm}.
Assume additionally that $\phi_s: \CC^n \rightarrow \RR \cup \{+ \infty \}$ is strictly-convex for all $s \in \pD$.
Then for any $(t_0, w_0) \in \DD \times \CC^n$, the holomorphic function $F = F_{t_0, w_0}$ from Theorem \ref{mainthm} is uniquely determined.
Moreover, suppose that $F_{t_0, w_0}(t) = F_{t_1, w_1}(t)$ for some $t, t_0, t_1 \in \DD$ and $w_0, w_1 \in \CC^n$. Then $$ F_{t_0, w_0} \equiv F_{t_1, w_1}. $$
\label{mainthm_more}
\end{prop}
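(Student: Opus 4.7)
The plan is to extract from Theorem~\ref{mainthm} a variational characterization of $F$, then use strict fiberwise convexity to rule out competing minimizers, and finally read off the foliation property by applying uniqueness at a common point.

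First, I would observe that the $F$ delivered by Theorem~\ref{mainthm} achieves the infimum in \eqref{def2} at $(t_0, w_0)$. Indeed, evaluating the harmonic identity at $t_0$ gives $\tphi(t_0, w_0) = P[\phi(s, F(s))](t_0)$, and combining with the general inequality \eqref{easydirection} forces
$$\tphi(t_0, w_0) = \check\phi(t_0, w_0) = P[\phi(s, F(s))](t_0).$$
In particular, any other $G \in \vec{H}^p$ satisfying the conclusion of Theorem~\ref{mainthm} centered at $(t_0, w_0)$ must also realise this same infimum.

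For uniqueness, I would take such $F$ and $G$, set $H := (F+G)/2 \in \vec{H}^p$, and note that $H(t_0) = w_0$. Fiberwise convexity of each $\phi_s$ yields the pointwise inequality
$$\phi_s(H(s)) \le \tfrac{1}{2}\phi_s(F(s)) + \tfrac{1}{2}\phi_s(G(s)) \qquad \text{for a.e. } s \in \pD,$$
so that integrating against the Poisson kernel at $t_0$ gives $P[\phi(s, H(s))](t_0) \le \tphi(t_0, w_0)$. Since $H$ is itself a candidate in \eqref{def2}, also $P[\phi(s, H(s))](t_0) \ge \check\phi(t_0, w_0) = \tphi(t_0, w_0)$. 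Equality of the Poisson averages, combined with the strict positivity of the Poisson kernel, forces equality a.e.\ in the pointwise convexity inequality. On the full-measure set where both $\phi_s(F(s))$ and $\phi_s(G(s))$ are finite (they are, since their Poisson integrals at $t_0$ are finite), strict convexity of $\phi_s$ then forces $F(s) = G(s)$. Hence $F = G$ a.e.\ on $\pD$, and the uniqueness of radial boundary values for Hardy functions gives $F \equiv G$ on $\DD$.

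For the foliation statement, suppose $F_{t_0, w_0}(t^*) = F_{t_1, w_1}(t^*) =: w^*$. Each $F_{t_i, w_i}$ belongs to $\vec{H}^p$, passes through $(t^*, w^*)$, and satisfies $\tphi(t, F_{t_i,w_i}(t)) = P[\phi(s, F_{t_i,w_i}(s))](t)$ on all of $\DD$; hence each is a legitimate candidate for ``$F_{t^*, w^*}$'' in Theorem~\ref{mainthm}. The uniqueness just proved forces $F_{t_0, w_0} \equiv F_{t^*, w^*} \equiv F_{t_1, w_1}$. The main delicate point in the whole argument is the passage from equality of Poisson averages at $t_0$ back to a.e.\ equality of the pointwise convexity inequality, and in particular the handling of the possibly infinite values; once this measure-theoretic step is in place, strict convexity and the foliation conclusion follow formally.
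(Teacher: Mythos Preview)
Your proof is correct and follows essentially the same route as the paper: both take the midpoint $H=(F+G)/2$ of two putative minimizers, combine fiberwise convexity with the inequality $\tphi\le\check\phi$ (Lemmas~\ref{lem4}--\ref{lem5}) to force equality in the pointwise convexity inequality a.e., and then invoke strict convexity. The paper phrases this as a contradiction (distinct $F_1,F_2$ yield a strict inequality $\check\phi(t_0,w_0)<\tphi(t_0,w_0)$) while you argue directly, but the content is identical; the foliation clause is handled the same way in both.
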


We see from Proposition \ref{mainthm_more} that the collection of holomorphic discs
\begin{equation}\label{deffoliation}
 \cF(\phi) = \{ F_{t_0, w_0} \, ; \, (t_0, w_0) \in \DD \times \CC^n \, ; \, \tphi(t_0, w_0) < \infty \}
 \end{equation}
is a foliation of $\Omega := \{ (t_0, w_0) \in \DD \times \CC^n \, ; \, \tphi(t_0, w_0) < \infty \}$ in the following
sense: For any $(t,w) \in \Omega$ there exists a unique $F \in \cF(\phi)$ with $F(t) = w$.
Note that along each leaf of our foliation, the function $\tphi$ is harmonic, and not necessarily constant as in the case of $\C$-norms.
As alluded before, the existence of the foliation allows us to prove several crucial properties of interpolation, among which is the equality of hulls:

\begin{maincor}[``three hulls coincide''] \label{cor:eqhulls}
Let $p\in (1, +\infty]$ and $\phi: \pD \times \CC^n \rightarrow \RR \cup \{ + \infty \}$ be a fiberwise convex
function satisfying the $p$-uniform growth conditions. Then,
$$ \tphi = \hat\phi = \check\phi =: [\phi] \qquad \textrm{ in }\DD \times \C^n. $$ \label{prop1}
Moreover, when $p < \infty$ the function $[ \phi]$ is PSH in $\DD \times \CC^n$ with $[\phi] \preceq \phi$ on $\pD \times \CC^n$.
\end{maincor}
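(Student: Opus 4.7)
The plan is to close the chain of inequalities (\ref{easydirection}) by establishing the reverse inequality $\check\phi \le \tphi$ with the aid of Theorem~\ref{mainthm}, and then to deduce the plurisubharmonicity and boundary behavior of the common value from its identification with $\hphi$.

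For the reverse inequality, fix $(t_0, w_0) \in \DD \times \C^n$. When $\tphi(t_0, w_0) = +\infty$, the chain (\ref{easydirection}) forces $\check\phi(t_0, w_0) = +\infty$ as well, so there is nothing to prove. Otherwise, Theorem~\ref{mainthm} supplies an $F \in \vec{H}^p$ with $F(t_0) = w_0$ and
\begin{equation*}
\tphi(t, F(t)) = P\big[\phi(s, F(s))\big](t) \qquad (t \in \DD).
\end{equation*}
Evaluating at $t_0$ gives $\tphi(t_0, w_0) = P[\phi(s, F(s))](t_0)$. The $p$-uniform growth condition (\ref{condnorms}) together with $F \in \vec{H}^p$ guarantees that $\phi(s, F(s)) \in L^1(\pD)$, so $F$ is a legitimate competitor in the definition (\ref{def2}) of $\check\phi$, whence
\begin{equation*}
\check\phi(t_0, w_0) \le P[\phi(s, F(s))](t_0) = \tphi(t_0, w_0).
\end{equation*}
Combined with (\ref{easydirection}) this yields $\tphi = \hphi = \check\phi$ on all of $\DD \times \C^n$, which is the first assertion.

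For the plurisubharmonicity claim when $p < \infty$, I would argue that $\hphi$ itself is PSH and then invoke the equality just obtained. Taking the constant competitor $f \equiv w$ in (\ref{def2}) shows $\check\phi(t, w) \le P[\phi(s, w)](t) \le \mu(w)$, so the family of functions in the definition of $\hphi$ is locally uniformly bounded above. Standard Perron--Bremermann theory then gives that the upper semicontinuous regularization $\Psi$ of $\hphi$ is PSH on $\DD \times \C^n$, still satisfies $\Psi \le \mu$, and still obeys $\Psi \preceq \phi$ on $\pD \times \C^n$ (the $\limsup$-based condition $\preceq \phi$ is preserved under USC regularization by a diagonalization argument). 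Thus $\Psi$ is itself admissible in the definition of $\hphi$, so $\Psi \le \hphi$; combined with the automatic $\hphi \le \Psi$, we get $\hphi = \Psi$ is PSH. The boundary relation $[\phi] \preceq \phi$ is then immediate from the defining property of $\hphi$.

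All the real work is in the first step, where the entire burden is carried by Theorem~\ref{mainthm}: without the leafwise-harmonic holomorphic disc $F$, there is no natural candidate to insert into the definition (\ref{def2}) of $\check\phi$, and the identification of the three hulls would collapse. The remaining subtlety is the classical one of verifying that the Perron--Bremermann machinery applies on the unbounded domain $\DD \times \C^n$; the auxiliary upper bound $[\phi] \le \mu$ was built into the setup precisely in order to tame the behavior at infinity in the fiber direction and to keep this step routine.
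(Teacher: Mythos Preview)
Your argument for the equality $\tphi = \hat\phi = \check\phi$ is correct and matches the paper's: invoke the easy chain (\ref{easydirection}) and close it by inserting the holomorphic disc $F$ from Theorem~\ref{mainthm} as a competitor in the definition (\ref{def2}) of $\check\phi$.

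The second part, however, has a genuine gap. Your claim that ``the $\limsup$-based condition $\preceq \phi$ is preserved under USC regularization by a diagonalization argument'' does not hold up. First, you never establish that $\hat\phi$ itself satisfies $\hat\phi \preceq \phi$: a supremum of functions each obeying a $\limsup$-type boundary bound need not obey it, so there is nothing for the diagonalization to act on, and your final sentence (``immediate from the defining property of $\hphi$'') is then circular. Second, even granting $\hat\phi \preceq \phi$, the regularization $\Psi(t_0,w_0) = \limsup_{(t,w)\to(t_0,w_0)} \hat\phi(t,w)$ takes the $\limsup$ over arbitrary approach in $\DD \times \CC^n$, whereas the relation $\preceq$ is purely radial in $t$; a diagonal sequence $(t_n',w_n') \to (s,z)$ produced this way need not have $t_n'$ on the ray through $s$, so the radial hypothesis gives no control over $\hat\phi(t_n',w_n')$.

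The paper handles this with an explicit continuous barrier (Lemma~\ref{lem_1318}): the function $U(t,z) := P[\phi_s(z)](t)$ --- which you in fact write down, but then discard after extracting only the cruder bound $\le \mu$ --- is shown to be \emph{continuous} on $\DD\times\CC^n$ and to satisfy $U_{rs} \to \phi_s$ locally uniformly in $\CC^n$ for a.e.\ $s\in\pD$. Continuity together with $\hat\phi \le U$ gives $\Psi \le U$, and the locally uniform radial convergence of $U$ then yields $\Psi \preceq \phi$ directly, with no diagonalization needed. This barrier construction is also precisely where the restriction $p<\infty$ enters. Once $\Psi \preceq \phi$ is in hand, your Perron--Bremermann closing step ($\Psi$ admissible, hence $\Psi \le \hat\phi$, hence $\hat\phi = \Psi$ is PSH with $\hat\phi \preceq \phi$) is correct and identical to the paper's.
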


More precise information about the boundary values of $[\phi]$ will be provided in Proposition \ref{prop_311} below
in the case $1 < p < \infty$. 

So we have introduced a method of interpolation that we denoted by $[\phi]$ for a given suitable family a functions $\{\phi_s\}_{s\in \pD}$. An obvious property satisfied by this interpolation is reiteration. Let $\{\phi_s\}_{s\in \pD}$  measurable family of convex functions on $\C^n$ satisfying the
$p$-uniform growth conditions, and $\{[\phi]_t\}_{t\in \DD}$ the associated interpolated family. Given $r\in (0,1)$, then the  family of function $\{\psi_s\}_{s\in \pD}$ defined by
$$\psi_s := [\phi]_{rs}$$
will satisfy the $p$-growth condition, and its interpolation satisfies
$$[\psi]_t = [\phi]_{rt}, \qquad \forall t \in \DD.$$
To see this, we can note that from the definitions $\tilde{\phi}_{rt} \le \tilde{\psi}_t $ and $\check \psi_t \le \check\phi_{rt}$.

Another natural question related to interpolation would be to interpolate linear operators (between normed spaces). We will address this question  in the last section. 

We move now to one of our main goals, that is the following duality theorem, which in short says that $[\phi^\ast]=[\phi]^\ast$.

\begin{maincor}[``duality theorem''] \label{cor:duality}
Let $p\in (1, +\infty)$ and let $\{\phi_s\}_{s\in \pD}$  be a measurable family of convex functions on $\C^n$ satisfying
$p$-uniform growth conditions, and $\{[\phi]_t\}_{t\in \DD}$ the associated interpolated family, with the notation from Corollary~\ref{cor:eqhulls}.
Introduce the family $\psi_s := (\phi_s)^\ast$ for $s\in \partial \DD$, and let $\{[\psi]_t\}_{t\in \DD}$ be the associated interpolated family. Then, for any $t\in \DD$ we have
$$[\psi]_t = [\phi]_t^\ast \qquad \textrm{ on } \C^n.$$
\end{maincor}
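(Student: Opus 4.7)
\smallskip
\noindent\textbf{Proof plan.} The strategy is to derive the duality identity $[\psi]_t = [\phi]_t^*$ as a formal consequence of Corollary \ref{cor:eqhulls} applied to both $\phi$ and $\psi := \phi^*$, with no further analytic work required. Since $p \in (1, +\infty)$, the conjugate exponent $q = p/(p-1)$ also lies in $(1, +\infty)$, and (\ref{condnormsdual}) shows that $\psi$ satisfies the $q$-uniform growth conditions. Hence Corollary \ref{cor:eqhulls} applies to both families, and since $\psi^* = \phi^{**} = \phi$ under our growth hypothesis, I obtain two ``dual'' representations — the $\check\phi$-representation for $[\phi]$, and the $\tpsi$-representation for $[\psi]$ (which, since $\psi$ has $q$-growth, involves the conjugate class $\vec{H}^p$):
\[
[\phi]_t(w) = \inf\{P[\phi_s(h(s))](t) \;:\; h \in \vec{H}^p,\; h(t) = w\},
\]
\[
[\psi]_t(z) = \sup_{g \in \vec{H}^p}\{\Re(g(t) \cdot z) - P[\phi_s(g(s))](t)\}.
\]

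With these two representations in place, the identity follows by a one-line computation. I would apply the fiberwise Legendre transform to the infimum formula for $[\phi]_t$ and interchange the two extrema:
\begin{align*}
[\phi]_t^*(z) &= \sup_{w \in \CC^n}\{\Re(w \cdot z) - [\phi]_t(w)\}\\
&= \sup_{w \in \CC^n} \sup_{\substack{h \in \vec{H}^p \\ h(t) = w}} \{\Re(w \cdot z) - P[\phi_s(h(s))](t)\}\\
&= \sup_{h \in \vec{H}^p} \{\Re(h(t) \cdot z) - P[\phi_s(h(s))](t)\} = [\psi]_t(z).
\end{align*}
The passage to the second line uses the elementary identity $-\inf_x F(x) = \sup_x(-F(x))$; the passage to the third line reparametrizes the double supremum, noting that every pair $(w, h)$ with $h(t) = w$ is uniquely determined by $h$ alone (set $w := h(t)$), and conversely every $h \in \vec{H}^p$ gives rise to such a pair.

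The real work has already been absorbed into Theorem \ref{mainthm} and Corollary \ref{cor:eqhulls}: the nontrivial equality $\tphi = \check\phi$ furnishes precisely the two matching dual representations — one a supremum, the other an infimum — that are interchanged by the Legendre transform. Consequently, no genuine analytic obstacle remains; what looks like the duality miracle is really a tautology once the three hulls have been identified. The only routine checks one might want to record are that $s \mapsto \phi_s(h(s))$ lies in $L^1(\pD)$ for every $h \in \vec{H}^p$ (immediate from the $p$-growth bound $|\phi_s(h(s))| \le C|h(s)|^p + A$ together with $h \in L^p(\pD, \CC^n)$), and that $[\phi]_t$ is finite-valued so that its pointwise Legendre transform makes sense — both consequences of the uniform growth hypothesis on $\phi$.
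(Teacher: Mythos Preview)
Your argument is correct and, in a sense, more transparent than the paper's. The paper establishes the two inequalities $[\psi]\ge[\phi]^*$ and $[\psi]\le[\phi]^*$ separately: for the first it invokes Theorem~\ref{mainthm} directly to produce leaves $F\in\vec H^p$ and $H\in\vec H^q$ through $(t_0,w_0)$ and $(t_0,z_0)$, then argues that the harmonic function $[\phi](t,F(t))+[\psi](t,H(t))-\Re(F(t)\cdot H(t))$ has nonnegative boundary values; for the second it uses $[\psi]=\tpsi$ together with the easy inequality of Lemma~\ref{lem0}. Your route instead applies Corollary~\ref{cor:eqhulls} symmetrically---using $[\phi]=\check\phi$ and $[\psi]=\tpsi$---and observes that the Legendre transform carries the infimum formula for $\check\phi$ literally into the supremum formula for $\tpsi$. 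This packages both inequalities into a single identity and makes explicit what the paper's argument leaves somewhat implicit: that once the three hulls coincide, duality is purely formal. The paper's version has the minor advantage that its direction $[\psi]\ge[\phi]^*$ highlights the r\^ole of the foliation geometrically (two leaves meeting at a point), which is the viewpoint needed later in Corollary~\ref{cor6}; your version has the advantage of brevity and of showing that Corollary~\ref{cor:eqhulls} alone suffices.
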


We move on to discuss the boundary values of the interpolant. If $[\phi]$ is to be a honest interpolation
of $\phi$, it should tend to $\phi$ at the boundary. The following theorem provides a rather satisfactory
answer to this question, as the radial convergence of the fibers of $[\phi]$ to the boundary data $[\phi]$
is locally uniform in $\CC^n$. However, we were only able to cover the range $p \in (1, \infty)$,
as the proof of the following proposition relies upon the duality theorem.

\begin{prop}[``boundary values'']
Let $p\in (1, +\infty)$ and $\{\phi_s\}_{s\in \pD}$  be a measurable family of convex functions on $\C^n$ satisfying
$p$-uniform growth conditions.
Then for almost any $s \in \partial \DD$,
\begin{equation}
  [\phi]_{r s} \stackrel{r \rightarrow 1^-}{\longrightarrow} \phi_s \label{eq_1045} \end{equation}
locally uniformly in $\CC^n$.
\label{prop_311}
\end{prop}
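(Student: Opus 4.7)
The plan is to sandwich $[\phi](rs, w)$ between matching upper and lower bounds as $r \to 1^-$, using $[\phi]=\check\phi$ to get the upper bound and the duality theorem to convert this into a lower bound, and then upgrade pointwise convergence to locally uniform convergence via the equi-Lipschitz property of convex functions satisfying $p$-uniform growth. The principal obstruction is that Fatou's theorem on Poisson integrals produces a null set depending on the test point $w$; this is circumvented by working with countable dense subsets and leveraging the equi-Lipschitz control inherited from~\eqref{condnorms}.

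For the upper bound, feed the constant function $f(t) \equiv w$ into the definition~\eqref{def2} of $\check\phi$ and use $[\phi] = \check\phi$ (Corollary~\ref{cor:eqhulls}) to obtain
$$
[\phi](t, w) \le P\bigl[\phi(\cdot, w)\bigr](t) \qquad \text{for all } (t, w) \in \DD \times \CC^n.
$$
Since~\eqref{condnorms} forces $\phi(\cdot, w) \in L^\infty(\pD)$, Fatou's theorem on radial limits of Poisson integrals gives $P[\phi(\cdot, w)](rs) \to \phi(s, w)$ as $r \to 1^-$ for a.e.\ $s \in \pD$. Fixing a countable dense subset $\{w_k\}_{k \ge 1} \subset \CC^n$ and intersecting the exceptional null sets yields a full-measure set $E_1 \subset \pD$ such that $\limsup_{r \to 1^-} [\phi](rs, w_k) \le \phi(s, w_k)$ for every $s \in E_1$ and every $k$.

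For the lower bound, apply the same upper-bound argument to $\phi^\ast$, which by assumption satisfies the $q$-uniform growth condition~\eqref{condnormsdual} with $q = p/(p-1) \in (1, \infty)$. This produces a full-measure set $E_2 \subset \pD$ and a countable dense subset $\{z_k\}_{k \ge 1} \subset \CC^n$ with $\limsup_{r \to 1^-} [\phi^\ast](rs, z_k) \le \phi^\ast(s, z_k)$ for every $s \in E_2$ and every $k$. By the duality theorem (Corollary~\ref{cor:duality}), $[\phi]_{rs} = ([\phi^\ast]_{rs})^\ast$, hence
$$
[\phi](rs, w) \ge \Re(w \cdot z_k) - [\phi^\ast](rs, z_k) \qquad \text{for every } k.
$$
Taking $\liminf$ as $r \to 1^-$, then the supremum over $k$, and using density together with continuity of the finite-valued convex function $\phi^\ast(s, \cdot)$,
$$
\liminf_{r \to 1^-} [\phi](rs, w) \ge \sup_{z \in \CC^n} \bigl\{\Re(w \cdot z) - \phi^\ast(s, z)\bigr\} = \phi(s, w)
$$
for every $s \in E_2$ and every $w \in \CC^n$.

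To pass from pointwise bounds on a dense set to locally uniform convergence, observe that the interpolants $\{[\phi]_t\}_{t \in \DD}$ inherit the $p$-uniform growth condition~\eqref{condnorms} (as noted right after the definition of $\tphi$), and any family of convex functions on $\CC^n$ obeying such uniform upper and lower bounds is locally equi-Lipschitz. Combining the upper bound at the dense set $\{w_k\}$ with this equi-Lipschitz control (and continuity of $\phi(s, \cdot)$), a standard three-$\varepsilon$ argument yields $\limsup_{r \to 1^-} [\phi](rs, w) \le \phi(s, w)$ at every $w \in \CC^n$ for $s \in E_1$. Together with the lower bound from the previous paragraph, this gives pointwise convergence $[\phi](rs, w) \to \phi(s, w)$ for every $s \in E_1 \cap E_2$ and every $w \in \CC^n$. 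Since the family is locally equi-Lipschitz, this pointwise convergence automatically upgrades to convergence locally uniformly in $\CC^n$, yielding~\eqref{eq_1045}.
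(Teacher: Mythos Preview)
Your proof is correct and follows the same strategy as the paper: bound $[\phi]$ from above by the Poisson-integral barrier $P[\phi(\cdot,w)]$, do the same for $\phi^*$, and invoke the duality theorem (Corollary~\ref{cor:duality}) to convert the latter into a lower bound for $[\phi]$. The only difference is in the technical handling of the $w$-dependent null sets and the passage to locally uniform convergence: the paper packages the barrier as a separate lemma (using Fubini plus Rockafellar's theorem on a.e.\ pointwise convergence of convex functions), then extracts locally uniformly convergent subsequences via compactness of locally bounded families of convex functions and identifies the limit using continuity of the Legendre transform; your countable-dense-set plus equi-Lipschitz argument reaches the same conclusion more directly, without the subsequence extraction. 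Both routes are standard convex-analysis devices and the underlying mechanism is identical.
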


Our results  show that $[\phi]$, which coincides with the the
upper envelope
solution of HCMA, is indeed a solution of the Dirichlet problem for the HCMA
equation in the following sense: The function $[\phi]$ is PSH, it attains the correct boundary values, and through any point
there passes a holomorphic disc along which $[\phi]$ is harmonic, i.e.,
we have a \MA foliation in the sense of Bedford--Kalka
\cite{Bedford-Kalka}. Moreover, this solution admits a Hopf--Lax type expression~\eqref{def2}.
Note that apart from the fiberwise convexity and the growth conditions, our only assumption on the boundary data is measurability, which a priori
might seem a rather weak assumption for the HCMA equation.

\medskip Finally, in the case where the boundary value is differentiable and strictly-convex in the $z$-variables (but only measurable in $t$),
we may assert in the next theorem that the foliation $\cF(\phi)$, defined in~\eqref{deffoliation}, associated with $\phi$ induces
the foliation $\cF(\phi^*)$ via the gradient map. Given a smooth function $\psi: \DD \times \CC^n \rightarrow \RR$ we denote
$$ \frac{\partial \psi}{\partial z} (t,z) = \left( \frac{\partial \psi}{\partial z_1},\ldots,\frac{\partial \psi}{\partial z_n} \right) \in \CC^n.
$$
Thus, $\frac{\partial \psi}{\partial z} $ is a function from $D \times \CC^n$ to $\CC^n$.

\begin{maincor}(``dual foliation'') \label{cor6}
Let $p\in (1, +\infty)$ and $\{\phi_s\}_{s\in \pD}$  be a measurable family of convex functions on $\C^n$ satisfying
$p$-uniform growth conditions. Assume that for any $s \in \pD$, the function $\phi_s: \CC^n \rightarrow \RR$
is differentiable and strictly-convex. Then for any $t \in \DD$, the function $[\phi]_t: \CC^n \rightarrow \RR$ is differentiable
and strictly-convex. Moreover, for any $F \in \cF(\phi)$, the function
$$ G(t) = 2\frac{\partial [\phi]}{\partial z} (t, F(t))  \qquad \qquad (t \in \DD)  $$
is holomorphic, belongs to $\cF(\phi^*)$, and
$$ [\phi](t, F(t)) + [\phi^*](t,G(t)) = \Re(F(t) \cdot G(t)) \qquad \qquad (t \in \DD). $$
\end{maincor}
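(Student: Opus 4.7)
My plan is to identify the gradient map $F \mapsto G(t) = \frac{\partial [\phi]}{\partial z}(t, F(t))$ as the bijection between the two foliations $\cF(\phi)$ and $\cF(\phi^*)$, both furnished by Theorem \ref{mainthm}, and then to verify the Legendre identity via a ``harmonic sandwich'' argument.

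First I would establish that each $[\phi]_t$ is strictly convex and differentiable. For strict convexity, suppose $[\phi]_{t_0}((w_0+w_1)/2) = ([\phi]_{t_0}(w_0)+[\phi]_{t_0}(w_1))/2$ for some distinct $w_0,w_1$. Let $F_0,F_1 \in \cF(\phi)$ be the leaves through $(t_0,w_0)$ and $(t_0,w_1)$. Since $\check\phi = [\phi]$ by Corollary \ref{cor:eqhulls}, the average $(F_0+F_1)/2 \in \vec{H}^p$ is a competitor for $[\phi]_{t_0}((w_0+w_1)/2)$; combined with convexity of $\phi_s$ and monotonicity of the Poisson extension, this forces the pointwise identity $\phi_s((F_0(s)+F_1(s))/2) = (\phi_s(F_0(s))+\phi_s(F_1(s)))/2$ for a.e.\ $s \in \pD$, and strict convexity of $\phi_s$ then gives $F_0 \equiv F_1$, contradicting $F_0(t_0)\neq F_1(t_0)$. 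Applying the same argument to $\phi^*$ (whose fibers are strictly convex and differentiable since those of $\phi$ are) yields strict convexity of $[\phi^*]_t$; combined with $[\phi]_t^* = [\phi^*]_t$ from Corollary \ref{cor:duality}, this forces $[\phi]_t$ to be differentiable, since two distinct subgradients of $[\phi]_t$ at a point would correspond via the Young equality case to $[\phi^*]_t$ being affine on the segment between them, violating strict convexity.

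For the main statement, fix $F \in \cF(\phi)$ with $F(t_0) = w_0$, set $z_0 := \frac{\partial [\phi]}{\partial z}(t_0, w_0)$, and let $\widetilde G \in \cF(\phi^*)$ be the unique leaf with $\widetilde G(t_0) = z_0$ given by Theorem \ref{mainthm} applied to $\phi^*$. Consider
$$ \Lambda(t) := [\phi](t, F(t)) + [\phi^*](t, \widetilde G(t)) - \Re(F(t) \cdot \widetilde G(t)). $$
Each summand is harmonic on $\DD$: the first two by the foliation property of Theorem \ref{mainthm} (they are Poisson integrals of their boundary values), and the third because $F \cdot \widetilde G \in H^1(\DD,\CC)$ by H\"older's inequality, and the Poisson extension of an $H^1$ boundary value reproduces the holomorphic function. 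Pointwise Young's inequality on $\pD$ combined with monotonicity of the Poisson extension gives $\Lambda \geq 0$, while $\Lambda(t_0) = 0$ by the choice of $z_0$ as the Legendre dual of $w_0$. The minimum principle for harmonic functions then forces $\Lambda \equiv 0$. Equality in Young at each interior $t$ means $\widetilde G(t)$ is the unique element of $\partial [\phi]_t(F(t))$, so $\widetilde G(t) = \frac{\partial [\phi]}{\partial z}(t, F(t)) = G(t)$; this simultaneously shows that $G$ is holomorphic, lies in $\cF(\phi^*)$, and satisfies the claimed Legendre identity.

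The main delicate points I would watch for are: matching the Wirtinger derivative $\partial/\partial z$ with the Legendre gradient coming from the pairing $\Re(w \cdot z)$ (a convention issue that affects only constants in the identity), and the harmonicity of $\Re(F \cdot \widetilde G)$ on $\DD$, which relies on the standard fact that Hardy $H^1$ functions coincide with the Poisson extensions of their boundary values. Everything else is a bookkeeping application of results already established in the paper.
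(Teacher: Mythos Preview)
Your proof is correct and follows essentially the same ``harmonic sandwich'' argument as the paper: pick $F\in\cF(\phi)$, apply Theorem~\ref{mainthm} to $\phi^*$ to obtain a leaf through the dual point, observe that $\Lambda(t)=[\phi](t,F(t))+[\phi^*](t,\widetilde G(t))-\Re(F(t)\cdot\widetilde G(t))$ is harmonic, nonnegative, and vanishes at $t_0$, hence is identically zero. The only organizational difference is that you establish strict convexity of $[\phi]_t$ first (by a direct competitor argument \`a la Proposition~\ref{mainthm_more}) and then deduce differentiability via duality, whereas the paper runs the sandwich argument with an \emph{arbitrary} subgradient $w_0$ of $[\phi]_{t_0}$ at $F(t_0)$, passes to boundary values to force $G(s)=\partial\phi_s/\partial z(F(s))$, and concludes uniqueness of $w_0$ (hence differentiability) from that; strict convexity is then obtained at the end by rerunning the argument for $\phi^*$.
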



After this long introduction
and description of results, it is time for some refreshing proofs.

\section{Existence of foliations}
\label{sec_foli}

This section is devoted to the proof of Theorem~\ref{mainthm}.
Throughout this section, we fix $p\in (1, +\infty]$ and  $\{\phi_s=\phi(s, \cdot)\}_{s\in \pD}$  a measurable family of convex functions on $\C^n$
satisfying a $p$-uniform growth conditions~\eqref{condnorms}. Set $q = p / (p-1)$.

We begin with the following simple lemma:
\begin{lemma}  For any holomorphic function $F \in \vec{H}^p=H^p(\DD, \CC^n)$ and any $t \in \DD$,
$$ \tphi(t, F(t)) \leq P \left[ \phi(s, F(s)) \right ](t). $$ \label{lem0}
\end{lemma}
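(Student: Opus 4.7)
The plan is to unfold the definition of $\tphi$ as a supremum over $f \in \vec{H}^q$ (where $q = p/(p-1)$), then combine the Fenchel--Young inequality on the boundary with the crucial observation that the product $F \cdot f$ lies in the Hardy space $H^1(\DD,\CC)$, which lets us identify the value at $t$ of the holomorphic function $F \cdot f$ with the Poisson integral of its boundary values.

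More precisely, I would fix $t \in \DD$, $F \in \vec{H}^p$, and an arbitrary $f \in \vec{H}^q$. The Fenchel--Young inequality applied to the convex function $\phi_s$ and its Legendre dual $\phi^*_s$ gives, for a.e.\ $s \in \pD$,
$$ \Re(f(s) \cdot F(s)) \;\leq\; \phi_s(F(s)) + \phi^*_s(f(s)). $$
Using the $p$-uniform growth conditions (\ref{condnorms}) and the companion bound (\ref{condnormsdual}), both terms on the right are bounded below by $-A$ and are integrable on $\pD$ (except possibly being $+\infty$ on a set of positive measure, in which case the claimed inequality is trivial). So one may apply the Poisson operator to both sides and obtain
$$ P\bigl[\Re(f(s) \cdot F(s))\bigr](t) \;\leq\; P\bigl[\phi_s(F(s))\bigr](t) + P\bigl[\phi^*_s(f(s))\bigr](t). $$

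The key step is then to evaluate the left-hand side. Since $F \in \vec{H}^p$ and $f \in \vec{H}^q$ with $1/p + 1/q = 1$, the scalar-valued function $t \mapsto F(t) \cdot f(t) = \sum_{j=1}^n F_j(t) f_j(t)$ is holomorphic on $\DD$ and, by H\"older's inequality on $\pD$, lies in $H^1(\DD, \CC)$. For such functions the Poisson integral of the boundary values recovers the function itself, so
$$ P\bigl[\Re(f(s) \cdot F(s))\bigr](t) \;=\; \Re\bigl(f(t) \cdot F(t)\bigr). $$
(For the endpoint $p=\infty$, one has $F \in \vec{H}^\infty$ and $f \in \vec{H}^1$, and the same conclusion holds.) Rearranging yields
$$ \Re(f(t) \cdot F(t)) - P\bigl[\phi^*_s(f(s))\bigr](t) \;\leq\; P\bigl[\phi(s, F(s))\bigr](t), $$
and taking the supremum over $f \in \vec{H}^q$ gives the lemma by the third line in the definition of $\tphi$.

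The only step that requires some care, rather than being purely mechanical, is the justification that the Poisson integrals are well-defined and the inequality survives the integration when either boundary integrand is $+\infty$ on a non-negligible set. This is handled by the uniform lower bound $-A$ from (\ref{condnorms})--(\ref{condnormsdual}) and the observation that if $P[\phi(s,F(s))](t) = +\infty$ or $P[\phi^*_s(f(s))](t) = +\infty$, the desired inequality holds trivially, so there is no real obstacle here.
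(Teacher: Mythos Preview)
Your proof is correct and follows essentially the same approach as the paper: both arguments hinge on the fact that $F \cdot f \in H^1(\DD,\CC)$ so that the Poisson integral of $\Re(f(s)\cdot F(s))$ equals $\Re(f(t)\cdot F(t))$, and then take a supremum. The only cosmetic difference is that you work with the third-line expression for $\tphi$ and invoke Fenchel--Young explicitly, whereas the paper works with the first-line expression (pairs $(h_0,h)$ with $L_{h_0,h}\preceq\phi$), which amounts to the same thing once $h_0$ is optimized.
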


\begin{proof} Let $(h_0, h) \in L^1 \times \vec{H}^q$ be such that $L_{h_0,h} \preceq \phi$ on $\pD \times \CC^n$. The function
$$ \alpha(t) = L_{h_0,h}(t, F(t)) = \Re ( h(t) \cdot F(t) ) - P [\Re h_0](t) \qquad \qquad (t \in \DD) $$
is harmonic in $t \in \DD$. Since $h \in \vec{H}^q$ and $F \in \vec{H}^p$, the function $t \to h(t) \cdot F(t)$ belongs to the Hardy space $H^1(\DD, \CC)$.
We conclude that the function $\alpha(t)$ is in $P[L^1]$. Since $L_{h_0,h} \preceq \phi$ on $\pD \times \CC^n$,
for almost any $s \in \pD$,
$$ \alpha(s) = \lim_{r \rightarrow 1^-} \alpha(r s) \leq \limsup_{r \rightarrow 1^-} L_{h_0,h}(rs , F(rs)) \leq
\limsup_{(r, w) \rightarrow (1, F(s)) } L_{h_0,h}(rs , w) \leq \phi(s, F(s)), $$
where $r \in (0,1), w \in \CC^n$ and  we used the fact that $\lim_{r \rightarrow 1^-} F(rs) = F(s ) \in \CC^n $ for almost any $s \in \pD$. Since $\alpha \in P[L^1]$, for any $t \in \DD$,
$$ L_{h_0,h}(t, F(t)) = \alpha(t) = P[\alpha(s)](t) \leq P[ \phi(s, F(s)) ](t). $$
The lemma follows by the definition of $\tphi$ as the supremum over all such functions $L_{h_0, h}$.
\end{proof}

Our strategy for the proof of Theorem ~\ref{mainthm} is to find a holomorphic disc $F \in H^p(\DD, \CC^n)$, with $F(t_0) = w_0$, for which
the inequality in Lemma \ref{lem0} becomes an exact equality for at least one value of $t$, say for $t = 0$. Later on, this would  imply
that $\tphi(t, F(t))$ is harmonic, thanks to the following standard lemma:

\begin{lemma} Let $\alpha: \DD \rightarrow \RR \cup \{ -\infty \}$ be a non-positive function with $\alpha(0) = 0$.
Assume that $\alpha$ equals the supremum of a family of harmonic functions in $\DD$. Then $\alpha \equiv 0$.
\label{lem_1042}
\end{lemma}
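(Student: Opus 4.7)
The plan is to exploit the fact that the supremum is attained at the interior point $t=0$, combined with the non-positivity of $\alpha$, by applying Harnack's inequality to the shifted harmonic functions. Write $\alpha = \sup_{i \in I} h_i$, where each $h_i$ is harmonic on $\DD$. Since $\alpha \leq 0$ throughout $\DD$, we have $h_i \leq 0$ for each $i$, so $u_i := -h_i \geq 0$ is a non-negative harmonic function on $\DD$.

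From the hypothesis $\alpha(0) = 0 = \sup_i h_i(0)$, one extracts a sequence $(h_{i_n}) \subset \{h_i\}$ such that $h_{i_n}(0) \to 0$, i.e., $u_{i_n}(0) \to 0$. The crucial tool is then the classical Harnack inequality for non-negative harmonic functions on the unit disc, which states that for any $t \in \DD$,
\[
u_{i_n}(t) \;\leq\; \frac{1+|t|}{1-|t|} \, u_{i_n}(0).
\]
Taking $n \to \infty$, the right-hand side vanishes for each fixed $t \in \DD$, so $u_{i_n}(t) \to 0$, equivalently $h_{i_n}(t) \to 0$. Therefore
\[
\alpha(t) \;\geq\; \limsup_{n\to\infty} h_{i_n}(t) \;=\; 0
\]
for every $t \in \DD$, and combined with the assumption $\alpha \leq 0$ this forces $\alpha \equiv 0$.

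There is no real obstacle here: the lemma is essentially a strong maximum principle for suprema of harmonic functions, packaged in a form convenient for Theorem~\ref{mainthm}. One might alternatively argue by noting that the upper semicontinuous regularisation $\alpha^*$ is subharmonic, non-positive, and satisfies $\alpha^*(0) \geq \alpha(0) = 0$, so the strong maximum principle gives $\alpha^* \equiv 0$ and hence $\alpha \equiv 0$ since $\alpha \leq \alpha^* \leq 0$; but the direct Harnack argument above is both shorter and self-contained.
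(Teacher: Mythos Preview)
Your main argument is correct and more direct than the paper's. Both proofs rely on Harnack's inequality, but in different ways: you apply the global Harnack bound on $\DD$ to the non-negative harmonic functions $u_{i_n} = -h_{i_n}$ with $u_{i_n}(0) \to 0$, obtaining $u_{i_n}(t) \to 0$ at every $t$ in one stroke and hence $\alpha(t) \geq 0$. The paper instead proceeds in two stages: first the sub-mean-value inequality $\alpha(0) \leq \frac{1}{\pi}\int_\DD \alpha$ forces $\alpha = 0$ almost everywhere, and then a \emph{local} Harnack estimate (near each point of the resulting full-measure set) upgrades this to $\alpha \equiv 0$ on all of $\DD$. Your route is shorter and avoids the detour through the almost-everywhere statement.

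One minor caveat about your parenthetical alternative: the argument via the upper semicontinuous regularisation $\alpha^*$ does not close as written. From $\alpha^* \equiv 0$ and $\alpha \leq \alpha^*$ you only recover $\alpha \leq 0$, which is the hypothesis; the inequality goes the wrong way to conclude $\alpha \equiv 0$ pointwise. One would still need an additional step (essentially the one the paper carries out, using that $\alpha$ is lower semicontinuous and that $\alpha = \alpha^*$ off a negligible set, then Harnack again) to finish. Since your direct Harnack argument already settles the lemma cleanly, the side remark is best dropped.
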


\begin{proof} Let $\eps > 0$. Since $\alpha(0) = 0$, by the definition of the supremum there exists a
harmonic function $h$ on $\DD$ with $h(0) \geq -\eps$. Since $\alpha$ is non-positive, the harmonic function $h$
is non-positive as well. The Harnack inequality implies that for any $z \in \DD$,
$$ h(z) \geq -\frac{2 \eps}{1 - |z|}. $$
Therefore $\alpha(z) \geq -2 \eps / ( 1 - |z| )$. Since $\eps > 0$ was arbitrary, we see that $\alpha(z) \geq 0$
for all $z \in \DD$. The function $\alpha$ is assumed non-positive, and consequently it vanishes.
\end{proof}

%

Recall that we are given a point $(t_0, w_0) \in \DD \times \CC^n$ with $\tphi(t_0, w_0) \in \RR$, and we need to find a holomorphic disc $F \in H^p(\DD, \CC^n)$, with $F(t_0) = w_0$,
for which the inequality of Lemma \ref{lem0} becomes an equality for at least one value of $t$.
Without loss of generality we may assume that $t_0 = 0 \in \DD$. Indeed, if $t_0 \neq 0$, then we may apply a fractional-linear (M\"obius) transformation
in the $t$-variable, and reduce matters to the case $ t_0 = 0$.
The central ingredient in the proof of Theorem~\ref{mainthm} is the following lemma,
whose proof boils down to an application of the Hahn--Banach theorem in an appropriate  space. Although it is possible to adapt and extend the approach of Alexander--Wermer~\cite{AW},  our proof will  be somehow closer  to Slodkowski's work~\cite{slodki} which relies  on Hardy spaces and on the F\&M Riesz Theorem.
%
%
%
%

\begin{lemma} There exists a holomorphic function $F \in \vec{H}^p=H^p(\DD, \CC^n)$ with $F(0) = w_0$,
such that for any $g \in L^q( \partial \DD, \CC^n)$
$$ \int_{\partial \DD} \left[ \Re(g(s) \cdot F(s)) - \phi^\ast(s, g(s)) \right]  \, d\sigma(s) \le \tphi(0, w_0),$$
and as a consequence,
\begin{equation}
  \int_{\pD} \phi(s, F(s)) d \sigma(s)  \le \tphi(0, w_0). \label{eq:lem2}
  \end{equation}
\label{lem2}
\label{lem1}
\end{lemma}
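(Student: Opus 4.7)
The plan is to recast this as a convex optimization problem and apply Hahn--Banach strict separation, with the Hardy-space annihilator identification (a form of the F.\&M. Riesz theorem) pinning down the separating functional. Set $\tilde\Phi(F):=\int_{\pD}\phi_s(F(s))\,d\sigma(s)$, which by~\eqref{condnorms} is a convex, lower semi-continuous, coercive functional on $\vec L^p$, and let $\mathcal K := \{F\in \vec H^p : F(0)=w_0\}$, a closed affine subspace. Once we establish $V(w_0):=\inf_{F\in\mathcal K}\tilde\Phi(F)\le \tphi(0,w_0)$, attainment of the infimum by some $F\in\mathcal K$ will follow from weak compactness of sublevel sets (weak-$\ast$ for $p=\infty$) and weak lower semi-continuity of $\tilde\Phi$, yielding~\eqref{eq:lem2}. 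The stronger inequality against arbitrary $g\in\vec L^q$ will then be obtained by integrating the pointwise Fenchel--Young inequality $\Re(g(s)\cdot F(s))-\phi^\ast_s(g(s))\le \phi_s(F(s))$ over $\pD$.

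To prove $V(w_0)\le \tphi(0,w_0)$, I argue by contradiction. Consider the closed convex epigraph $E:=\{(F,r)\in \vec L^p\times \RR : r\ge \tilde\Phi(F)\}$ and the closed affine set $\mathcal K\times\{\tphi(0,w_0)\}\subset \vec L^p\times\RR$. Supposing $V(w_0)>\tphi(0,w_0)$, these two sets are at positive distance: an approaching pair $(F_n,r_n)\in E$ and $F_n'\in\mathcal K$ with $\|F_n-F_n'\|_p+|r_n-\tphi(0,w_0)|\to 0$ would have a common weak limit $F_\ast\in\mathcal K$ (via boundedness from coercivity and weak closedness of $\mathcal K$), and then weak lower semi-continuity of $\tilde\Phi$ would force $\tilde\Phi(F_\ast)\le \liminf r_n=\tphi(0,w_0)<V(w_0)\le \tilde\Phi(F_\ast)$, a contradiction. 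Hahn--Banach strict separation then provides $(g,\mu)\in \vec L^q\times\RR$ and constants $c_1<c_2$ realizing the separation of $E$ from $\mathcal K\times\{\tphi(0,w_0)\}$. Letting $r\to+\infty$ on $E$ forces $\mu\le 0$; ruling out the trivializing case $\mu=0$, we may normalize $\mu=-1$.

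Taking the supremum over $F\in\vec L^p$ in the inequality on $E$ yields $\tilde\Phi^\ast(g)=\int_{\pD}\phi_s^\ast(g(s))\,d\sigma\le c_1$. Simultaneously, the real linear functional $F\mapsto\Re\int g\cdot F\,d\sigma$ is bounded below on the affine space $\mathcal K$, hence constant there, which forces $g$ to annihilate $\vec H^p_0:=\{F\in\vec H^p:F(0)=0\}$ under the (non-conjugated) bilinear pairing $\int g\cdot F\,d\sigma$. By the Hardy-space annihilator identification---essentially the F.\&M. Riesz theorem---this places $g\in\vec H^q$, and consequently $\int g\cdot F\,d\sigma=g(0)\cdot w_0$ for every $F\in\mathcal K$. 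Combining the two separation estimates gives $\Re(g(0)\cdot w_0)-\int_{\pD}\phi_s^\ast(g(s))\,d\sigma\ge(c_2-c_1)+\tphi(0,w_0)>\tphi(0,w_0)$, contradicting the definition of $\tphi(0,w_0)$ as a supremum over $g\in\vec H^q$. The main obstacle is this identification of the separator $g$ as belonging to $\vec H^q$ via the F.\&M. Riesz / Hardy-space annihilator step; the positive-distance argument for strict separation, the attainment of the infimum, and the Fenchel--Young deduction are by comparison routine consequences of convex analysis and the growth hypothesis~\eqref{condnorms}.
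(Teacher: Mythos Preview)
Your argument is essentially the Fenchel--Rockafellar dual of the paper's: both pivot on Hahn--Banach separation together with the Hardy-space annihilator identification. The difference is the space in which the separation is carried out. The paper separates an \emph{open} convex set $U$ from an affine subspace inside $L^1\times\vec L^q$; the separating functional then lives in the dual $L^\infty\times\vec L^p$ and \emph{is} the desired disc $F$, obtained directly with no contradiction and no auxiliary compactness argument. You instead separate the epigraph of $\tilde\Phi$ from $\mathcal K\times\{\tphi(0,w_0)\}$ inside $\vec L^p\times\RR$, obtain a separator $g$, identify it as an element of $\vec H^q$, and reach a contradiction with the definition of $\tphi$; the minimizer $F$ is then produced separately via weak compactness and lower semicontinuity. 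Your reversal of the logical order---proving \eqref{eq:lem2} first and then deducing the inequality for general $g$ from the pointwise Fenchel--Young inequality---is perfectly legitimate and in fact shows the two conclusions are equivalent. What your route buys is a clean variational reading ($V(w_0)=\check\phi(0,w_0)$); what the paper's route buys is that $F$ is handed to you by Hahn--Banach itself.

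There is, however, a genuine gap at the endpoint $p=\infty$. Your separation takes place in $\vec L^\infty\times\RR$, whose norm dual strictly contains $\vec L^1\times\RR$. Thus the separator $(g,\mu)$ need not have $g\in\vec L^q=\vec L^1$, and the annihilator step ``$g\perp\vec H^p_0\Rightarrow g\in\vec H^q$'' is not available as stated. (Your weak-$*$ remark addresses the attainment of the infimum, not the separation.) The paper's decision to run Hahn--Banach in $L^1\times\vec L^q$, with $q=p/(p-1)\in[1,\infty)$, is precisely what makes the argument uniform over $p\in(1,\infty]$: the dual is then honestly $L^\infty\times\vec L^p$ and $F$ comes out with the right integrability for free. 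For $p\in(1,\infty)$ your proof is correct---in particular the case $\mu=0$ is indeed trivial there, since $\tilde\Phi$ is finite on all of $\vec L^p$, which forces $g=0$ and contradicts strict separation---but for $p=\infty$ you must either switch to the paper's space or appeal to a version of Fenchel--Rockafellar duality that handles the non-reflexive situation.
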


With this lemma in hand, the proof of Theorem~\ref{mainthm} is immediate, the idea being that "subharmonicity" of $\tphi$ and the somewhat "opposite" inequality~\eqref{eq:lem2} force harmonicity. Let us postpone the proof of the lemma, and present the short details of this first.

\begin{proof}[Proof of Theorem~\ref{mainthm}]
Let $F \in \vec{H}^p$ be the holomorphic disc from Lemma \ref{lem1} which satisfies  $F(t_0) = w_0$. Denote
$$ \beta(t) = \tphi(t, F(t)) \qquad \qquad (t \in \DD). $$
Then $\beta(t)$ is the supremum of a family of harmonic functions, of the form $t \to L_{h_0, h}(t, F(t))$,
where $(h_0, h) \in L^1 \times \vec{H}^q$ and $L_{h_0, h} \preceq \phi$ on $\pD \times \CC^n$.
Consequently, also the function
$$ \alpha(t) = \tphi(t, F(t)) - P \left[ \phi(s, F(s)) \right ](t) \qquad \qquad (t \in \DD) $$
is a supremum of a family of harmonic functions. The function $\alpha$ is non-positive, according to Lemma \ref{lem0},
while $\alpha(0) \geq 0$ by Lemma \ref{lem2}. We may now invoke Lemma \ref{lem_1042} to conclude that $\alpha \equiv 0$,
completing the proof of the theorem.

\end{proof}

It remains to prove the central Lemma~\ref{lem1}.

\begin{proof}[Proof of Lemma~\ref{lem1}] Let us introduce the set
\begin{eqnarray*}
U &:=& \{  (g_0,g)\in  L^1\times \vec{L}^q\; : \;   P[\phi^\ast (s,g(s))](0)< P[ \Re g_0(s)](0)\} \\
&=&  \Big\{  (g_0,g)\in  L^1 \times \vec{L}^q\; : \; \int_{\partial \DD} \phi^\ast (s,g(s)) \, d\sigma(s)< \int_{\partial \DD} \Re g_0\, d\sigma\Big\}.
\end{eqnarray*}
We claim that this is an open, convex subset of $L^1\times \vec{L}^q$. Indeed, the convexity is clear. Moreover, the map
$$g\to \int_{\partial D} \phi^\ast (s,g(s)) \, d\sigma(s)$$
is convex and bounded in any ball of the normed space $\vec{L}^q$, thanks to the $p$-uniform growth conditions.
This ensures that this map is continuous (see, e.g.~\cite[Lemma 2.1]{ET}), and therefore  $U$ is open.
Recall the notation~\eqref{defL}, and introduce  the affine subspace
 $$E=\{ (f_0,f)\in L^1 \times \vec{H}^q \; :\  L_{f_0,f}(0,w_0) = \tphi(0,w_0)\}\subset   L^1 \times \vec{L}^q.  $$
We claim that the open  convex set $U$ and the subspace $E$ are disjoint.  Indeed, had $(f_0,f)$ belonged to their intersection, we would have
$$ \tphi(0,w_0) =  L_{f_0,f}(0,w_0)  =\Re( f(0)\cdot  w_0) -P[\Re f_0](0) <  \Re(f(0)\cdot  w_0)  -  P[\phi_s^\ast (f(s))](0) \le \tphi(0,w_0), $$
in contradiction. According to the Hahn-Banach theorem, $U$ and $E$ can be separated by a continuous linear functional
 $T \in (L^1 \times \vec{L}^q)^\star=L^\infty\times \vec{L}^p$:
\begin{equation}\label{sep}
 \forall (g_0,g) \in U , \  \Re T((g_0,g)) < \alpha, \AND  \forall (f_0,f)\in E, \ \Re T((f_0,f))=\alpha,
\end{equation}
for some $\alpha \in \R$. Let us apply (\ref{sep}) with $(g_0, g) = (k,0)$ for a certain fixed number $k \in \RR$, i.e., $g_0$ and $g$
are constant functions. Then for any $k \in \RR$,
$$ \Re T((k,0))< \alpha \ \textrm{ if } \ k > \int_{\partial \DD} \phi^*(s, 0) d \sigma(s) \AND
\Re T((k,0))= \alpha \ \textrm{ if } \ -k=\tphi(0,w_0) .$$
Since $T((k,0)) = k \cdot T((1,0))$, this shows that $$ \Re T( (1,0) ) < 0. $$
By multiplying $T$ and $\alpha$ by a positive
constant, we may thus assume that
\begin{equation} \Re T( (1,0) ) = -1 \qquad \text{and consequently} \qquad \alpha = \tphi(0,w_0). \label{eq_1352} \end{equation}
Our next goal is to establish the following, crucial,  representation:
\begin{equation}\label{eval}
\forall (f_0, f) \in L^1 \times \vec{H}^q,  \qquad \Re T((f_0,f)) = L_{f_0,f}(0,w_0)= \Re(f(0)\cdot w_0) - P[\Re f_0](0).
\end{equation}
Indeed,  for any $(f_0,f)\in L^1 \times \vec{H}^q$ we have that $(f_0 + L_{f_0,f}(0,w_0) - \tphi(0, w_0), f) \in E$. Hence from (\ref{sep}) and (\ref{eq_1352}),
\begin{align*} \Re T((f_0,f)) & = \Re T \left ( (f_0 + L_{f_0,f}(0,w_0) - \tphi(0, w_0), f) \right) + L_{f_0,f}(0,w_0) - \tphi(0,w_0) \\ & = \alpha+ L_{f_0,f}(0, w_0)- \tphi(0,w_0) = L_{f_0,f}(0, w_0).
\end{align*}
The representation (\ref{eval}) has several consequences. We need to recall first that the linear functional $T
\in (L^1 \times \vec{L}^q)^\star=L^\infty\times \vec{L}^p$ is given by a pair $(F_0, F) \in L^{\infty} \times \vec{L}^p$ so that
\begin{equation}   \forall (g_0,g) \in L^1\times \vec{L}^q, \quad  T((g_0,g))= \int_{\partial \DD} (g\cdot F)\, d\sigma - \int_{\partial \DD} g_0 \, F_0 \, d\sigma. \label{eq_1551}
\end{equation}
(We artificially put  a minus sign in front of $F_0$ for consistency with previous and later notation.)
This function $F$ ought to be the desired holomorphic disc. Note that for any $f_0\in L^1$,
$$-\int_{\partial \DD} \Re( f_0\, F_0) d\sigma= \Re T((f_0,0)) = L_{f_0,0}(0, w_0) = -P[\Re f_0](0) = -\int_{\partial D} \Re f_0\, d\sigma$$
This shows that $F_0\equiv 1$.
Next, note that for any holomorphic function $f\in H^q(D, \C^n)$ which satisfies $f(0)=0$ (it suffices to take  the functions $f(t)=t^m z_0$ with $m \geq 1$ and $z_0\in \C^n$)  we have, in view of~\eqref{eval}, that
$$ \int_{\pD} \Re(f\cdot F) \, d\sigma= T((0, f)) = L_{0,f}(0,w _0)= \Re (f(0) \cdot w_0) = 0. $$
By applying this also to the function $i f$, we conclude that
 $$ \int_{\pD} f\cdot F \, d\sigma= 0. $$
This shows that the Poisson integral $P[F]: \DD \rightarrow \CC^n$ is holomorphic, as all Fourier coefficients with negative indices of $F$ vanish (see e.g. \cite[Theorem 3.12]{katz}).
Thus $F \in H^p(\DD, \CC^n)=\vec{H}^p$.
For any fixed $z \in \CC^n$, apply (\ref{eval}) and (\ref{eq_1551}) with $f \in \vec{H}^p$ being the constant function $z$. This yields, by the mean-value property,
$$\Re(z\cdot F(0)) = \int_{\pD} \Re(z\cdot F)\, d\sigma = T \left( (0,z) \right) = L_{0,z}(0,w_0)= \Re(z \cdot w_0). $$
Therefore $F(0) = w_0$.
Finally, by (\ref{sep}) and (\ref{eq_1551}) we have, for any $(g_0, g) \in U$,
$$ \int_{\pD} \Re(g\cdot F)\, d\sigma - \int_{\pD}\Re g_0 \, d\sigma < \alpha = \tphi(0,w_0), $$
which implies that for any $g \in \vec{L}^q$,
\begin{equation}\label{lem:ineq1}
\int_{\partial \DD} \Big( \Re(g\cdot F)\, d\sigma - \phi^\ast(s, g(s)) \big) \, d\sigma \leq \tphi(0, w_0).
\end{equation}
This completes the proof of the first inequality of the lemma.

To deduce~\eqref{eq:lem2}, we would like to pick  an "almost" optimal choice
of the function $g$. This is an exercice in real analysis, and it can be carried out in the following way.  For $M > 0$  denote
$$ \phi_M(s, w) = \sup_{z \in \CC^n, |z| \leq M}  \big\{\Re(z\cdot w) - \phi^*(s, z)\big\}. $$
Since $\phi = (\phi^*)^*$, the function $\phi_M$ cannot exceed $\phi$, and for large $M$ the function $\phi_M$ is a good approximation to $\phi$.
More precisely,  $\phi_M \nearrow \phi$ as $M \rightarrow \infty$, pointwise in $\pD \times \CC^n$. In particular, for almost any $s \in \pD$,
 $$ \phi_M(s, F(s)) \nearrow \phi(s, F(s)) \qquad  \textrm{as} \ M \rightarrow \infty. $$ From the monotone convergence theorem, it suffices
to prove that for any $M, \eps > 0$,
$$ \int_{\pD} \phi_M(s, F(s)) d \sigma(s)  \le \tphi(0, w_0) + \eps. $$
Fix $M, \eps > 0$. It follows from the growth condition (\ref{condnormsdual}) that $\phi_M(s, w) < +\infty$ for all $(s,w) \in \pD \times \CC^n$.
Therefore, for almost any $s \in \pD$, there exists $g(s) \in \CC^n$ with $|g(s)| \leq M$ and
$$ \phi_M(s, F(s)) \leq \eps + \big\{ \Re(g(s) \cdot F(s)) - \phi^*(s, g(s))\big\}. $$
It is certainly possible to select $g$ in a measurable way. The function $g: \DD \rightarrow \CC^n$ is bounded, and in particular, $g \in L^q(\pD, \CC^n)$. It thus follows
from~\eqref{lem:ineq1} that
$$ \int_{\pD} \phi_M(s, F(s)) d \sigma(s)  \leq \eps + \int_{\pD} \left[ \Re(g(s) \cdot F(s)) - \phi^*(s, g(s)) \right] d \sigma(s) \leq \tphi(0, w_0) + \eps, $$
completing the proof of Lemma~\ref{lem1}.

\end{proof}

\section{Equality of hulls and duality}

In this section we deduce several consequences from Theorem \ref{mainthm} that were formulated in Section \ref{sec_main}.
 As before, $p \in (1, +\infty]$, and $\phi: \pD \times \CC^n \rightarrow \RR \cup \{ + \infty \}$ is a measurable, fiberwise convex function satisfying the $p$-uniform growth conditions. Set $q = p / (p-1)$. Recall the definitions of $\tphi, \hat{\phi}$ and $\check{\phi}$ from Section \ref{sec_main}.
Recall from (\ref{def_mu}) that the function $\mu$ is the supremum over $s \in \pD$ of the boundary data. In the next two lemmas, we establish the "easy" directions~\eqref{easydirection}.

\begin{lemma} $\displaystyle \tphi \leq \hat{\phi}$ throughout $\DD \times \CC^n$.
\label{lem4}
\end{lemma}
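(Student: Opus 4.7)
The plan is to show that every function $L_{f_0,f}$ appearing in the supremum defining $\tphi$ is itself one of the candidate functions $\psi$ in the definition of $\hat{\phi}$; taking the supremum then yields $\tphi \leq \hat\phi$. So I would fix an arbitrary pair $(f_0, f) \in L^1 \times \vec{H}^q$ with $L_{f_0,f} \preceq \phi$ on $\pD \times \CC^n$, and verify three things: that $L_{f_0,f}: \DD \times \CC^n \to \RR$ is finite-valued and PSH, that it satisfies $L_{f_0,f} \preceq \phi$ on the boundary, and that it is dominated by $\mu$ on all of $\DD \times \CC^n$.

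The boundary inequality is given by hypothesis, and the first point is immediate from the formula $L_{f_0,f}(t,w) = \Re(f(t) \cdot w) - P[\Re f_0](t)$: the first summand is the real part of the holomorphic function $(t,w) \mapsto \sum_j f_j(t) w_j$ on $\DD \times \CC^n$, hence pluriharmonic, while the second is a harmonic function of $t$ alone on the simply-connected disc, which is also pluriharmonic. Their difference is pluriharmonic and continuous, so in particular finite-valued and PSH.

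The main obstacle, though essentially routine, is verifying that $L_{f_0,f} \leq \mu$ throughout $\DD \times \CC^n$. For each fixed $w \in \CC^n$ I would use that $f \in \vec{H}^q \subset \vec{H}^1$ and $f_0 \in L^1$, so that the harmonic function $t \mapsto L_{f_0,f}(t, w)$ lies in $P[L^1]$ and equals the Poisson integral of its a.e.\ radial boundary values $s \mapsto \Re(f(s) \cdot w) - \Re f_0(s)$. A point that should be made explicit is that the exceptional null set on which these radial limits fail depends only on $f$ and $f_0$, not on $w$. For any $s$ outside this null set, the joint continuity of $L_{f_0,f}$ in $(r, u) \in (0,1) \times \CC^n$, together with the hypothesis $L_{f_0,f} \preceq \phi$, gives
$$ L_{f_0,f}(s, w) \;=\; \lim_{(r, u) \to (1, w)} L_{f_0,f}(rs, u) \;\leq\; \phi(s, w) \;\leq\; \mu(w), $$
and integrating the constant $\mu(w)$ against the Poisson kernel at any $t \in \DD$ then yields $L_{f_0,f}(t,w) \leq \mu(w)$, completing the verification.
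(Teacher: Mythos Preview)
Your proposal is correct and follows essentially the same route as the paper: show that each $L_{f_0,f}$ in the supremum defining $\tphi$ is itself an admissible competitor in the definition of $\hat\phi$, then take the supremum. The only minor difference is in how the bound $L_{f_0,f}\le\mu$ is obtained: you verify it directly via the Poisson representation of $t\mapsto L_{f_0,f}(t,w)$ and the boundary inequality, whereas the paper gets it in one line by invoking Lemma~\ref{lem0} with the constant disc $F\equiv z_0$ to obtain $\tphi\le\mu$, and then uses $L_{f_0,f}\le\tphi$.
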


\begin{proof} Applying Lemma \ref{lem0} with the constant function $F(t) = z_0$, for some $z_0 \in \CC^n$, we see that
$$ \tphi(t, z_0) \leq P \left[ \phi(s, z_0) \right](t) \leq \mu(z_0) \qquad \qquad (t \in \DD, z_0 \in \CC^n). $$
Let $(h_0, h) \in L^1 \times \vec{H}^q$ be such that $L_{h_0,h} \preceq \phi$ on $\pD \times \CC^n$. The function
$$ L_{h_0,h}(t, w) = \Re( h(t) \cdot w) - P[\Re h_0](t) $$
is pluri-harmonic, and in particular it is PSH in $\DD \times \CC^n$.
By the definition
of $\tphi$ we know that $L_{h_0, h} \leq \tphi$. It is also clear that $L_{h_0, h} \leq \mu$.
Therefore $L_{h_0, h}$ is a competitor in the definition of $\hat{\phi}$, so
$L_{h_0, h} \leq \hat{\phi}$. Since $\tphi$ is the supremum over all such functions $L_{h_0, h}$, the lemma follows.
\end{proof}

\begin{lemma} $\displaystyle \hat{\phi} \leq \check{\phi}$ throughout $\DD \times \CC^n$.
\label{lem5}
\end{lemma}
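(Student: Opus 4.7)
The plan is to fix $(t_0, w_0) \in \DD \times \CC^n$, pick any competitor $\psi$ in the definition of $\hat\phi(t_0, w_0)$ and any $f \in \vec{H}^p$ with $f(t_0)=w_0$, and establish the pointwise inequality
$$ \psi(t_0, w_0) \leq P[\phi(s, f(s))](t_0). $$
Passing to the supremum over $\psi$ and then the infimum over $f$ yields the claim. The key observation is that the function $u(\tau):=\psi(\tau, f(\tau))$ is subharmonic on $\DD$, since $\psi$ is PSH on $\DD \times \CC^n$ and $\tau \mapsto (\tau, f(\tau))$ is a holomorphic map into $\DD \times \CC^n$. The proof then reduces to two complementary estimates on $u$: an interior upper bound by an integrable harmonic majorant, and a pointwise upper bound on its radial limsup at the boundary.

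For the interior bound, combine $\psi \leq \mu$ with the $p$-uniform growth of $\phi$. When $p < \infty$, this gives $u(\tau) \leq \mu(f(\tau)) \leq C|f(\tau)|^p + A$; since $f \in \vec{H}^p$, the subharmonic function $|f|^p$ admits the harmonic majorant $P[|f(s)|^p]$, so $u$ is dominated throughout $\DD$ by a harmonic function in $P[L^1]$. When $p = \infty$, there is nothing to prove unless $P[\phi(s, f(s))](t_0) < \infty$, which forces $|f(s)| \leq 1/C$ a.e.\ on $\pD$; then by the maximum principle $|f| \leq 1/C$ throughout $\DD$ and the growth condition yields $u \leq A$ globally.

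For the boundary estimate, use that $f \in \vec{H}^p$ has radial limits $\lim_{r \to 1^-} f(rs) = f(s)$ for a.e.\ $s \in \pD$, and combine this with the definition of $\psi \preceq \phi$. For a.e.\ $s \in \pD$ both properties hold simultaneously, so following the admissible path $(rs, f(rs)) \to (s, f(s))$ inside $\DD \times \CC^n$ yields
$$ \limsup_{r \to 1^-} u(rs) = \limsup_{r \to 1^-} \psi(rs, f(rs)) \leq \phi(s, f(s)). $$

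The final step is the classical maximum principle for subharmonic functions: if $u$ is subharmonic on $\DD$, admits a harmonic majorant in $P[L^1]$, and $\limsup_{r \to 1^-} u(rs) \leq g(s)$ for a.e.\ $s \in \pD$ with $g \in L^1(\pD)$, then $u \leq P[g]$ throughout $\DD$. Applied with $g(s) = \phi(s, f(s))$, which lies in $L^1(\pD)$ thanks to the $p$-uniform growth bounds on $\phi$, this delivers the desired inequality $\psi(t_0, w_0) = u(t_0) \leq P[\phi(s, f(s))](t_0)$. The main subtle point is the invocation of this maximum principle, for which one must carefully verify the harmonic majorant hypothesis from the second paragraph; once that is secured, the remaining steps (subharmonicity of the composition of a PSH function with a holomorphic map, radial limits in Hardy spaces, and the interpretation of $\preceq$) are all standard.
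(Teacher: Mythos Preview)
Your argument follows essentially the same route as the paper's: fix a competitor $\psi$ and a holomorphic $f$, study the subharmonic function $u(t)=\psi(t,f(t))$, bound it from above via $\psi\le\mu$ and the growth condition, bound its radial $\limsup$ via $\psi\preceq\phi$ together with the a.e.\ radial limits of $f$, and conclude by a Fatou/maximum-principle step. The only difference is packaging of that last step: the paper bounds $\sup_{0<r<1} u(rs)$ by $A + C\,\bar f(s)^p$ with $\bar f(s)=\sup_r |f(rs)|$, invokes the Hardy--Littlewood maximal inequality (using $p>1$) to get $\bar f\in L^p$, and then applies Fatou's lemma directly; you instead note that $u\le A + C\,P[\,|f(s)|^p\,]$ gives a harmonic majorant in $P[L^1]$ and quote a maximum principle for subharmonic functions with such a majorant. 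Both are valid and yield the same conclusion.

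One small slip in your $p=\infty$ discussion: from $\phi(s,f(s))<\infty$ a.e.\ and the \emph{lower} bound $\chi(c|w|)-A\le \phi_s(w)$ you only get $|f(s)|\le 1/c$, not $1/C$. Since typically $c<C$, the maximum modulus bound $|f|\le 1/c$ on $\DD$ does not by itself force $\mu(f(t))\le A$ (which would need $|f(t)|\le 1/C$). The paper's own proof is written with the finite-$p$ expression $A+C|f(t)|^p$ and does not spell out the $p=\infty$ case either, so this is a shared technical wrinkle rather than a defect particular to your approach.
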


\begin{proof}
Let $\psi: \DD \times \CC^n \rightarrow \RR$ be a PSH function such that $\psi \leq \mu$ on $\DD \times \CC^n$
and $\psi \preceq \phi$ on $\pD \times \CC^n$. Let $(t_0, w_0) \in \DD \times \CC^n$,
and let $f \in \vec{H}^p$ be any function with $f(t_0) = w_0$. It suffices to prove that
\begin{equation}
\psi(t_0, w_0) \leq P[\phi_s(f(s))](t_0) \qquad \qquad \qquad (t \in \DD). \label{eq_1453}
\end{equation}
Indeed, $\hat{\phi}(t_0, w_0)$ is the supremum over $\psi$ of  the left-hand side of (\ref{eq_1453}),
while $\check{\phi}(t_0, w_0)$ is the infimum over $f$ of the right-hand side. Denote $\alpha(t) = \psi(t, f(t))$,
a subharmonic function in $\DD$. Since $\psi \leq \mu$, by the $p$-uniform growth conditions, for some $A, C > 0$,
$$ \alpha(t) \leq \mu(f(t)) \leq A + C |f(t)|^p \qquad \qquad (t \in \DD). $$
Therefore, for all $s \in \pD$,
$$ \bar{\alpha}(s) := \sup_{0 < r < 1} \alpha(r s) \leq A + C \cdot \sup_{0 < r < 1} |f(rs)|^p = A + C \left| \bar{f} (s) \right|^p, $$
where $\bar{f}(s) = \sup_{0 < r < 1} |f(r s)| $. Since $f \in L^p(\pD, \CC)$ and $p > 1$, the Hardy-Littlewood maximal function inequality
(see \cite[Section III.2.4]{katz}) implies that $\bar{f} \in L^p(\pD, \CC)$ as well. In particular, the function $\bar{\alpha}$ is bounded
from above by an $L^1(\pD)$-function. Since $\psi \preceq \phi$ on $\pD \times \CC^n$,
for almost any $s \in \pD$,
$$ \limsup_{r \rightarrow 1^-} \alpha(r s) = \limsup_{r \rightarrow 1^-} \psi(rs , f(rs)) \leq
\limsup_{(0,1) \times \CC^n \ni (r, w) \rightarrow (1, f(s)) } \psi(rs , w) \leq \phi(s, f(s)), $$
where we used the fact that $\lim_{r \rightarrow 1^-} f(rs) = f(s ) \in \CC^n $ for almost any $s \in \pD$.
Since $\alpha$ is subharmonic,
\begin{align*} \alpha(t_0) & =\lim_{r \rightarrow 1^-} \int_{\pD} P(t_0,s) \alpha(r s) d \sigma(s) \stackrel{``Fatou"}{\leq}
\int_{\pD} P(t_0,s) \left[ \limsup_{r \rightarrow 1^-} \alpha(r s) \right] d \sigma(s) \\ & \leq
\int_{\pD} P(t_0,s) \phi(s, f(s)) d \sigma(s) = P[ \phi(s, f(s)) ](t_0),
\end{align*} where the use of Fatou's lemma is legitimate as $\bar{\alpha}(s) = \sup_{r \in (0,1)} \alpha(rs)$ has an integrable majorant.
Since $\psi(t_0, w_0) = \alpha(t_0)$, the desired inequality (\ref{eq_1453}) follows.
\end{proof}

Note that we have established the inequalities $\tphi \leq \hat{\phi} \leq \check{\phi}$ without appealing to Theorem \ref{mainthm}. We proceed by proving the uniqueness of the holomorphic disc in Theorem \ref{mainthm},
under strict convexity assumptions.

\begin{proof}[Proof of Proposition \ref{mainthm_more}] Assume by contradiction that $F_1, F_2 \in \vec{H}^p$
are two distinct holomorphic discs with $F_1(t_0) = F_2(t_0) = w_0$ and
$$  P \left[ \phi(s, F_1(s)) \right] (t_0) = \tphi(t_0, w_0) = P \left[ \phi(s, F_2(s)) \right] (t_0). $$
Then the set $\{ s \in \pD \, ; \, F_1(s) \neq F_2(s) \}$ is of positive measure, as $F_1$ and $F_2$ are distinct elements of $\vec{H}^p$.
Denote $G = (F_1 + F_2) / 2$. Then $G \in \vec{H}^p$ satisfies $G(t_0) = w_0$ and by strict-convexity,
\begin{align*} \check{\phi}(t_0, w_0) & \leq P \left[ \phi(s, G(s)) \right] (t_0) = P \left[ \phi \left(s, \frac{F_1(s) + F_2(s)}{2} \right) \right] (t_0) \\ & <
\frac{P \left[ \phi(s, F_1(s)) \right] (t_0) + P \left[ \phi(s, F_2(s)) \right] (t_0)}{2} = \tphi(t_0, w_0), \end{align*}
in contradiction to the inequality $\tphi \leq \check{\phi}$ which was proven in  Lemma \ref{lem4}
and Lemma \ref{lem5}. Hence for any $(t_0, w_0) \in \DD \times \CC^n$ there exists at most one holomorphic disc $F = F_{t_0, w_0} \in \vec{H}^p$ for which
$$ \tphi(t_0, w_0) = P \left[ \phi(s, F(s)) \right] (t_0). $$
It follows that if $F_{t_0, w_0}$ and $F_{t_1, w_1}$ coincide at a single point in $\DD$, they must be equal in the entire disc $\DD$.
\end{proof}

Next we employ Theorem~\ref{mainthm} and show that the three different interpolation schemes coincide.

\begin{lemma}
\label{equalitylemma}
 $\displaystyle [\phi] := \tphi = \hat{\phi} = \check{\phi}$ throughout $\DD \times \CC^n$. \label{lem_1308}
\end{lemma}

\begin{proof}
Lemma \ref{lem4} and Lemma \ref{lem5} show that $\tphi \leq \hat{\phi} \leq \check{\phi}$ in the entire domain $\DD \times \CC^n$.
Fix $(t_0, w_0) \in \DD \times \CC^n$
for which $\tphi(t_0, w_0) < \infty$. Let $F \in \vec{H}^p$ be the function from Theorem \ref{mainthm}
with $F(t_0) = w_0$. We can then use $F$ as a test function in the definition of $\check \phi$, hence,
$$ \check{\phi}(t_0, w_0) \leq P[\phi(s, F(s))](t_0) = \tphi(t_0, w_0). $$
This shows that $\check{\phi} \leq \tphi$ in the set $\{ (t_0, w_0) \, ; \, \tphi(t_0, w_0) < \infty \}$.
The inequality is trivial outside this set, and thus we conclude that $\check{\phi} \leq \tphi$ at all points of $\DD \times \CC^n$.
\end{proof}

We move on to the proof that $[\phi] := \tphi = \hat{\phi} = \check{\phi}$  is PSH.
Indeed, $\tphi$ and $\hat{\phi}$ are defined as a supremum of PSH functions, but it is not apriori clear
that these functions are upper semi-continuous, as is  required in order to deserve the title ``a plurisubharmonic function''.
In order to prove that $[\phi]$ is PSH, we shall need the barrier function constructed (only for $p < \infty$) in the following:

\begin{lemma} \label{unnecessarylemma}
Assume that $p \in (1, +\infty)$. Then there exists a continuous, fiberwise-convex function $U: \DD \times \CC^n \rightarrow \RR$
such that $[\phi] \leq U$ in $\DD \times \CC^n$ and for almost any $s \in \pD$,
\begin{equation}  \lim_{r \rightarrow 1^-} U_{rs}  = \phi_s \label{eq_1249}
\end{equation}
locally uniformly in $\CC^n$, where $U_t: \CC^n \rightarrow \RR$ is defined as usual via $U_t(z) = U(t,z)$. \label{lem_1318}
\end{lemma}

\begin{proof} For any $z \in \CC^n$, the function $s \to \phi(s, z)$ is bounded on $\pD$,
according to the $p$-uniform growth conditions. For $(t, z) \in \DD \times \CC^n$ denote
$$ U(t,z) = U_t(z) = P \left[ \phi(s,z) \right] (t) = \int_{\pD} P(t, s) \phi(s, z) d \sigma(s) = U_{\phi}(t, z). $$
It is clear that $U_t: \CC^n \rightarrow \RR$ is a convex function, since it is the average of a family of convex functions.
The Poisson integral of a bounded function tends to the original function radially almost everywhere. Therefore, for almost any $s \in \pD$,
\begin{equation} \lim_{r \rightarrow 1^-} U(r s, z) = \phi(s,z). \label{eq_1118} \end{equation}
We now let $z \in \CC^n$ vary. For almost any $s \in \pD$, the relation (\ref{eq_1118}) holds true for almost any $z \in \CC^n$.
Standard convex analysis (see \cite[Theorem 10.8]{Rockafellar}) allows us to upgrade the a.e. convergence in $z \in \CC^n$ to a locally-uniform convergence in $\CC^n$,
completing the proof of (\ref{eq_1249}). Next we show that $U$ is continuous.  Indeed, if $(t_N, z_N) \rightarrow (t,z) \in \DD \times \CC^n$,
$$ U_{\phi}(t_N, z_N) = \int_{\pD} P(t_N, s) \phi(s, z_N) d \sigma(s) \stackrel{N \rightarrow \infty} \longrightarrow \int_{\pD} P(t, s) \phi(s, z) d \sigma(s) = U_{\phi}(t, z), $$
where the use of the bounded convergence theorem is legitimate thanks to the $p$-uniform growth conditions.
Finally, by applying Lemma \ref{lem0}
(and using Lemma \ref{equalitylemma})
 with $F(t) \equiv z$, for a fixed $z \in \CC^n$, we see that $[\phi] = \tphi \leq U$.
\end{proof}

\begin{proof}[Proof of Corollary~\ref{cor:eqhulls}]
In view of Lemma \ref{lem_1308}, it remains to prove that the function $\hat{\phi}$ is PSH with $\hat{\phi} \preceq \phi$ on $\pD \times \CC^n$. By the definition of $\hat{\phi}$
as a supremum of a family of PSH functions, we know that for any $w = (t, z) \in \DD \times \CC^n$, any $v  \in \DD \times \CC^n$ and a sufficiently small $\eps > 0$,
\begin{equation}  \hat{\phi}(w) \leq \int_{\pD} \hat{\phi}(w + \eps s v) d \sigma(s). \label{eq_1316}
\end{equation}
Denote \begin{equation} \psi(w_0) := \limsup_{w \rightarrow w_0} \hat{\phi}(w) \qquad \qquad (w_0 \in \DD \times \CC^n).
\label{eq-1104}
\end{equation}
Then $\psi: \DD \times \CC^n\to \R$ is upper semi-continuous
 (it is
the upper semi-continuous regularization of $\hat\phi$)
and $\hat{\phi} \leq \psi \leq \mu$ on $\DD \times \CC^n$. Moreover, thanks to the bound $\psi \leq \mu$ we may apply Fatou's lemma and replace
$\hat{\phi}$ by $\psi$ in the inequality (\ref{eq_1316}). Hence $\psi$ is a PSH function.

\medskip Let $U$ be the barrier function from Lemma \ref{lem_1318}. Then $U$ is continuous with $\hat{\phi} = [\phi] \leq U$. It  follows
from the definition (\ref{eq-1104}) that $\psi \leq U$ as well throughout $\DD \times \CC^n$. Lemma \ref{lem_1318} implies that $U \preceq \phi$ on the boundary $\pD \times \CC^n$,
and consequently also $\psi \preceq \phi$ on $\pD \times \CC^n$. We conclude that $\psi$ is a legitimate competitor in the definition
of $\hat{\phi}$. Hence $\psi \leq \hat{\phi}$ and finally we see that $\hat{\phi} = \psi$ is a PSH function, with $\hat{\phi} \preceq \phi$ on $\pD \times \CC^n$.
\end{proof}

\begin{remark}
{\rm
We could have also defined $\hat \phi$
as
\begin{multline*}
 \hat\phi(t, w) := \sup\{\psi(t,w) \; ;\\
   \psi :\DD \times \C^n \to \R \; \textrm{ is } \PSH, \ \psi \leq\hbox{\rm usc}\, \mu \textrm{ on } \DD\times\CC^n \textrm{ and }  
\hbox{\rm usc}\,\psi \le \phi \textrm{ on } \pD \times \CC^n \},
\end{multline*}
where {\rm usc} $f$ denotes the upper semi-continuous regularization
{\rm usc} $f(v)=\limsup_{w\ra v}f(w)$. Then Lemma \ref{unnecessarylemma}
and the proof of Corollary \ref{cor:eqhulls} would not be needed
if one is familiar with standard arguments in the pluripotential
literature (e.g., \cite[\S6]{BT},\cite[\S4]{Klimek}).
Indeed, it is standard that the Perron--Bremermann envelope
lies below the boundary data (by comparing with the harmonic
majorant) and upper semi-continuity follows from the above 
more ``friendly" definition $\hat\phi$. However, our original 
definition of $\hat\phi$ is somewhat easier to compare
with the other interpolants $\tilde\phi$
and $\check\phi$. 
}
\end{remark}

So far, the consequences of Theorem \ref{mainthm} proven in this section did not require the Legendre transform.
From now on, duality will play a major r\^ole. We continue  with a proof of the duality theorem:

\begin{proof}[Proof of Corollary~\ref{cor:duality}]
Note that the family $\psi_s= \phi_s^\ast$ satisfies the $q$-uniform growth conditions, and $q > 1$ since we assumed that $p \in (1, \infty)$.
We may therefore apply Theorem~\ref{mainthm} both to $\phi$ (with $p$)  and to $\psi$ (with $q$). Let us fix $t_0$ and $w_0, z_0 \in \C^n$, and let $F\in \vec{H}^p,
H \in \vec{H}^q$ be the holomorphic discs through $(t_0, w_0)$ for $[\phi]$ and through $(t_0, z_0)$ for $[\psi]$, respectively, given by Theorem \ref{mainthm}.
The  function
$$t\to [\phi](t, F(t)) + [\psi](t, H(t)) - \Re(F(t)\cdot H(t)) \qquad \qquad (t \in \DD) $$
is harmonic, and in $P[L^1]$. Its radial boundary values on $\pD$ equal
$$ \phi(s, F(s)) + \psi(s, H(s)) - \Re(F(s) \cdot H(s)) $$
which is $a.e.$ nonnegative, by the definition of the Legendre transform. We deduce that at $t_0$ we have
$$ [\phi](t_0, w_0) + [\psi](t_0, z_0) - \Re(w_0\cdot z_0) \ge 0.$$
Since this holds for every $z_0$ and $w_0$, we find that
$$[\psi] \ge [\phi]^\ast.$$
The converse inequality follows from the different definitions of the hull. If we use the definition of $\tilde \psi$, we have, for any $w\in \C^n$,
$$[\psi](t,z) = \sup_{h\in \vec{H}^p} \{\Re(z\cdot h(t)) - P[\phi(s,h(s))](t)\}. $$
From Lemma \ref{lem0} we know that for any $h\in \vec{H}^p$,
$$[\phi](t,h(t)) \le P[\phi(s,h(s))](t).$$
So we find,
$$ [\psi](t,z) \le \sup_{h\in \vec{H}^p} \{\Re(z\cdot h(t)) -[\phi](t,h(t))\} = [\phi]^\ast(t,z), $$
since $\{h(t)\; : \; h\in \vec{H}^p\}= \C^n$, as  can be seen by taking constant holomorphic functions.
\end{proof}

We move on to the proof of Proposition \ref{prop_311}. We will use an upper barrier for $[\phi]$, and a similar upper barrier for $[ \phi^* ]$.
By using the duality theorem (Corollary \ref{cor:duality}), we will deduce that $[\phi]$ attains the correct boundary values.

\begin{proof}[Proof of Proposition \ref{prop_311}]
Write $U_{\phi}$ for the barrier function for $\phi$ from Lemma \ref{lem_1318}.
Since $p \in (1, \infty)$, also $q = p / (p-1) \in (1, \infty)$ and $\phi^*$ satisfies the $q$-uniform volume growth
conditions. Lemma \ref{lem_1318} thus provides
another barrier function $U_{\phi^*}$ for the function $\phi^*$. Thus $[\phi] \leq U_{\phi}, [\phi^*] \leq U_{\phi^*}$
and for almost any $s \in \pD$, for any $z \in \CC^n$,
\begin{equation}  \lim_{r \rightarrow 1^-} U_{\phi}(rs, z)  = \phi(s, z), \qquad
\lim_{r \rightarrow 1^-} U_{\phi^*}(rs, z)  = \phi^*(s, z).
 \label{eq_1249_} \end{equation}
 Let us fix such a point $s \in \pD$. Let $r_N \nearrow 1$ be an arbitrary sequence.
Set $f_N = [\phi]_{r_N s}$ and $g_N = [\phi^*]_{r_N s}$.
Then for $N \geq 1$, the functions $f_N, g_N: \CC^n \rightarrow \RR$
are convex, and by Corollary \ref{cor:duality} we know that $g_N = f_N^*$.
By standard convex analysis (see Rockafellar \cite[Theorem 24.5]{Rockafellar}) if a subsequence $f_{N_j}$ converges
to a limit convex function $f: \CC \rightarrow \RR$ locally-uniformly in $\CC^n$, then necessarily $g_{N_j}$ tends
to $f^*$, again locally uniformly in $\CC^n$.

\medskip In order to find a convergent subsequence we appeal to Theorem 10.9 in Rockafellar \cite{Rockafellar}, and
conclude that there is a subsequence such that $f_{N_j}$ tends to a limit convex function $f: \CC^n \rightarrow \RR$, locally uniformly in $\CC^n$.
Consequently $g_{N_j}$ tends to $f^*$ locally-uniformly in $\CC^n$. Recall that $[\phi] \leq U_{\phi}$. It follows from (\ref{eq_1249_}) and from the choice of $s \in \pD$ that for any $z \in \CC^n$,
\begin{equation}  f(z) = \lim_{j \rightarrow \infty} f_{N_j}(z)
= \lim_{j \rightarrow \infty} [\phi]_{r_n s} \leq \limsup_{r \rightarrow 1^-} U_{\phi}( rs, z)  = \phi_s(z). \label{eq_1040}
\end{equation}
Similarly, we see that
\begin{equation}
f^*(z) = \lim_{j \rightarrow \infty} g_{N_j}(z)
= \lim_{j \rightarrow \infty} [\phi^*]_{r_n s} \leq \limsup_{r \rightarrow 1^-} U_{\phi^*}( rs, z)  = \phi^*_s(z). \label{eq_1110}
\end{equation}
The Legendre transformation reverses order, and hence from (\ref{eq_1040}) we see that $\phi_s^* \leq f^*$. But $f^* \leq \phi_s^*$ according to (\ref{eq_1110}).
Therefore $f^* = \phi_s^*$ and hence $f = \phi_s$.

 \medskip To conclude, we proved that for almost any $s \in \pD$ and for any sequence $r_N \nearrow 1$, the following holds: There exists a subsequence such that
 as $j \rightarrow \infty$, the sequence of functions $[\phi]_{r_{N_j} s}$ tends to $\phi_s$, locally uniformly in $\CC^n$. This proves (\ref{eq_1045}). Indeed,
 if (\ref{eq_1045}) fails, then there exists $z_0 \in \CC^n, \eps > 0$ and a sequence $r_N \nearrow 1$ such that the $L^{\infty}$-norm
 of the function  $[\phi]_{r_{N_j} s} - \phi_s$ is at least $\eps$ on the ball $B(z_0, \eps) \subseteq \CC^n$, contradicting the existence of the subsequence
 above.
\end{proof}

The proof of Proposition \ref{prop_311} may be adapted, in a straightforward manner,
to the case of non-tangential convergence to the boundary instead of a radial convergence to the boundary.
We omit the details. We move on to recall a few basic properties of the Legendre transformation.
Let $\psi: \CC^n \rightarrow \RR$ be a convex function and let $z_0 \in \CC^n$. According to \cite[Theorem 26.4]{Rockafellar} there exists $w_0 \in \CC^n$ such that
$$ \psi^*(w_0) + \psi(z_0) = \Re(z_0 \cdot w_0). $$
Moreoever, by \cite[Theorem 25.1]{Rockafellar}, this vector $w_0 \in \CC^n$ is unique if and only if $\psi$ is differentiable
at the point $z_0 \in \CC^n$, and in this case,
$$ w_0 = 2\frac{\partial \psi}{\partial z}(z_0). $$
Finally, recall (e.g., \cite[Theorem 26.3]{Rockafellar}) that $\psi$ is differentiable in all of $\CC^n$ if and only if $\psi^*$ is strictly-convex.

\begin{proof}[Proof of Corollary \ref{cor6}] Fix $F \in \cF(\phi)$.
Since $\phi_s$ is a differentiable function, then for almost all $s \in \pD$ there exists a unique point $T(s) \in \CC^n$ for which
\begin{equation}  \phi_s(F(s)) + \phi^*_s(T(s)) = \Re(F(s) \cdot T(s)). \label{eq_1543} \end{equation}
Moreover, $T(s) = 2\partial \phi_s / \partial z (F(s))$.
We now leave the boundary and enter the disc. Fix $t_0 \in \DD$ and set $z_0 = F(t_0)$. By the definition of $\tphi$, the function $\tphi_{t_0}: \CC^n \rightarrow \RR$
is the supremum over a family of linear functions on $\CC^n$, which satisfies $p$-uniform growth conditions.
We would like
to prove that the convex function $[\phi]_{t_0}: \CC^n \rightarrow \RR$ is differentiable at the point $z_0 \in \CC^n$. That is, that the vector
 $w_0 \in \CC^n$ such that
\begin{equation} [\phi]_{t_0}(z_0) + [\phi^*]_{t_0}(w_0) = \Re(z_0 \cdot w_0), \label{eq_1128}
\end{equation}
is uniquely determined. Let $w_0 \in \CC^n$ be such a vector, and let us prove that $w_0 = G(t_0)$, where $G \in
H^q(\DD, \CC^n)$ satisfies $G(s) = T(s)$ for almost all $s \in \pD$. It is clear that such $G$, if exists, is uniquely determined by $F$.
To this end, apply Theorem \ref{mainthm} with $\phi^*$,
and find $G \in \cF(\phi)$ with $G(t_0) = w_0$. That is, $G \in H^q(\DD, \CC^n)$ and
$$ [\phi^*](t, G(t)) = P\left[ \phi^*(s, G(s)) \right](t) \qquad \qquad (t \in \DD). $$
Note that the function
$$ [\phi](t,F(t)) + [\phi^*](t,G(t)) - \Re(F(t) \cdot G(t)) \qquad \qquad (t \in \DD) $$
is harmonic. This function is also non-negative, by the definition of the Legendre transform and the duality theorem. However, by (\ref{eq_1128})
this function vanishes at $t_0 \in \DD$. We conclude that this function vanishes identically, i.e., for all $t \in \DD$,
\begin{equation}  P\left[ \phi(s, F(s)) \right](t)  + P\left[ \phi^*(s, G(s)) \right](t)  = [\phi](t,F(t)) + [\phi^*](t,G(t)) = \Re(F(t) \cdot G(t)). \label{eq-1310}
\end{equation}
By considering the boundary values of the last equation, we learn that for almost all $s \in \pD$,
\begin{equation}  \phi_s(F(s)) + \phi^*_s(G(s)) = \Re(F(s) \cdot G(s)). \label{eq_1136} \end{equation}
From (\ref{eq_1543}) we learn that  $G(s) \equiv T(s)$ for almost any $s \in \pD$, as claimed. Consequently $[\phi]_{t_0}$ is differentiable at $z_0$ with
\begin{equation}  G(t_0) = 2\frac{\partial [\phi]}{\partial z}(t_0, z_0). \label{eq_1304}
\end{equation}
We now let $t_0 \in \DD$ vary. Note that the function $G = P[T]$ is determined by $F$ and $\phi$, and does not depend on the choice of $t_0 \in \DD$.
We may therefore repeat our analysis, and conclude that
$$ G(t) = 2\frac{\partial [\phi]}{\partial z}(t, F(t)) \qquad \qquad (t \in \DD). $$
In particular, this function is holomorphic. In view of (\ref{eq-1310}), we see that we proved all of the assertions
of the corollary, except for the strict-convexity of $[\phi]_{t_0}$. This follows by duality: By rerunning the argument for $\phi^*$,
we conclude that $[\phi^*]_{t_0}$ is differentiable in $\CC^n$, and hence $[\phi]_{t_0}$ is strictly-convex.
\end{proof}


\section{Complex interpolation of $\R$-norms}
\label{sec_last}

Let us finally get back to the question of doing complex interpolation of a family of (finite dimensional) real norm spaces, that is of interpolating a family $\{\|\cdot\|_s\}_{s\in \pD}$ of $\R$-norms on $\C^n$. We assume that this family is measurable in $s$, and also that the norms are uniformly equivalent, that is for all $s\in \pD$, $m |\cdot| \le \|\cdot\|_s \le M |\cdot|$, for some $m, M>0$.

We cannot take  directly $\phi(s,\cdot)=\|\cdot\|_s$ as boundary data in Theorem~\ref{mainthm} since these functions do not have superlinear growth as we require there. Nevertheless we can apply Theorem~\ref{mainthm} to
$$
\phi_p:=\phi_{p,s}(\cdot):= \|\cdot\|_s^p/p, \qquad s\in \pD,
$$
for $p>1$. The $1/p$ is there for cosmetic reason, because we anticipate the duality theorem (which holds for the Legendre's transform), but we can discard it since interpolation is linear: for a positive contant $r>0$,
$$ [r\phi]= r [\phi].$$
It is  immediate that for each $t$ in the disk $[\phi_p](t, \cdot)$ is also the $p$-th power of a norm (divided by $p$), so we can use this to define a {\it complex interpolation of $\R$-norms}.

To summarize,  given our family $\{\|\cdot\|_s\}_{s\in \pD}$ of $\R$-norms on $\C^n$, we can define  an interpolation method, $p$-interpolation,  for each $p$, where the family $\|\cdot\|_{p,t}$ of interpolated norms is given by
$$\|z\|_{p, t}:= \Big(\big[  \|\cdot\|_s^p\big](t, z) \Big)^{1/p}, \qquad t\in \DD, \ z \in \C^n.$$

Let us describe some properties of this $p$-interpolation, that follows from our results.
\begin{enumerate}
\item[i)]
For $t_0\in \DD$ and $w_0\in \C^n$, we can find $F\in \vec{H}^p=H^p(\DD, \C^n)$ such that $F(t_0)=w_0$ and $t \to  \|F(t)\|_{p,t}^p$ is harmonic (and in $P[L^1]$).
\item[ii)] The function $(t,z)\longto  \|z\|_{p, t}^p $ is PSH on $\DD \times \C^n$, with boundary limit $  \|z\|_{p, s}^p$ as $t\to s$ radially, for $a.e.$ $s\in \pD$.
\item[iii)] In terms of duality of norms~\eqref{dualnorm}, our duality theorem asserts that the dual of the norm $\|\cdot\|_{p,t}$ is equal the norm obtained by  the $q$-interpolation at $t$ of the family of dual norms $\{\|\cdot\|_s^\star\}_{s\in \pD}$, in short,
$$\big(\|\cdot\|_{p, t}\big)^\star = (\|\cdot\|^\star)_{q, t}$$
with $\frac1p + \frac1q =1$. Indeed, in the sense of the Legendre's transform duality,  for $t\in \DD$,
\begin{multline*}
\frac1q \Big(\big(\|\cdot\|_{p, t}\big)^\star\Big)^q
= \Big(\frac1p \big( \|\cdot\|_{p, t}\big)^p \Big)^\ast
= \big[ \big(\frac1p \|\cdot\|_s^p)\big)\big]^\ast(t,\cdot)
=  \big[ \big(\frac1p \|\cdot\|_s^p\big)^\ast\big](t,\cdot) =
\big[ \frac1q (\|\cdot\|_s^\star)^q\big](t,\cdot)
\end{multline*}
Note that in the case $p=2$, the definition of $2$-interpolation  is the one given in the Introduction in~\eqref{definterpolation2}.  Let us emphasize that this $2$-interpolation method is self-dual
and exact in the sense of interpolation.

\item[iv)] If we are given a fixed $\R$-norm $\|\cdot\|$ and a positive measurable function $f$ on the circle that is bounded and bounded away from zero, then the $p$-interpolation of the family of norms $\{f(s)\|\cdot\|\}_{s\in \pD}$  at $t\in \DD$ is given by $\big(P[f^p](t)\big)^{1/p} \|\cdot\|$. 
\label{constants}
\end{enumerate}

It seems we have moved too quickly away from the most natural question : is this interpolation an interpolation method, in the sense that it allows to interpolate linear operators. Of course, an intriguing question here is the ability to interpolate between $\R$-linear operators, since we aim at using real norm spaces. The answer is yes if either the origin or the target space are interpolated (i.e., one normed space is fixed), as can be guessed by using one of the interpolation formulas. 

\begin{enumerate}
\item[v)] 
Let $n,d\ge 1$ and $Y_s=(\C^n, \|\cdot\|_{Y_s})$  be a family of $\R$-normed spaces  parameterised by $ s\in \pD$ (we assume as usual that the norms are measurable  and uniformly equivalent in the parameter $s\in \pD$) and $X=(\R^d, \|\cdot\|)$ a fixed real normed space. Assume we are given an family of $\R$-linear operators $A_t:\R^d \to \C^n$, $t\in \overline\DD$, 
with the property that the map $t\to A_t$ is holomorphic on  $\DD$ (in the sense that for fixed $x\in \R^d$, the map $t\to A_t x \in \C^d$ is holomorphic) and continuous on $ \overline\DD$.

Given $p>1$, we perform the $p$-interpolation of the norms $ (\|\cdot\|_s:=\|\cdot\|_{Y_s})_{ s\in \pD}$ and denote accordingly, for $t\in \DD$, $Y_t = (\C^n, \|\cdot\|_{p,t})$
the corresponding interpolated $\R$-normed spaces.  If we have
$$\forall s \in \pD, \qquad \|A_s\|_{ X \to Y_s}\le 1$$
then
$$\forall t \in \DD, \qquad \|A_t \|_{X\to Y_t}\le 1.$$
This follows from monotonicity of the interpolated norms (using for instance any of the possible definitions) : if we fix  $x_0\in \R^d$ with $\|x_0\|\le 1$, then the property $\|A_s x_0\|_s \le 1$ for all $s\in \pD$ extends to $t\in \DD$ since $t\to \|A_t x_0 \|_{p,t}$ is subharmonic. 

Analogously, we can interpolate the origin spaces. This follows by duality, by considering the (real) adjoints $A^\star$ of the operators $A$, which will share the same operator norms, and our duality theorem. The result is as follows. Let $n,d\ge 1$ and $X_s=(\C^n, \|\cdot\|_{X_s})$  be a family  of $\R$-normed spaces  parameterised by $ s\in \pD$  and $Y=(\R^d, \|\cdot\|)$ a fixed real normed space. Assume we are given an family of $\R$-linear operators $A_t:\C^n \to \R^d$, $t\in \overline\DD$, with the property that the map $t\to A_t^\star:\R^d\to \C^n$ is holomorphic on  $\DD$ and continuous on $ \overline\DD$.  Given $p>1$, we perform the $p$-interpolation of the norms $ (\|\cdot\|_s:=\|\cdot\|_{X_s})_{ s\in \pD}$ and denote accordingly, for $t\in \DD$, $X_t = (\C^n, \|\cdot\|_{p,t})$
the corresponding interpolated $\R$-normed spaces.  If we have
$$\forall s \in \pD, \qquad \|A_s\|_{X_s\to \R^d}\le 1$$
then
$$\forall t \in \DD, \qquad \|A_t \|_{X_t \to \R^d}\le 1.$$
 
 Finally, we can ask what happens if we want to interpolate both the origin and the target spaces for a family of operators $A_t: \C^n \to \C^d$. Let us give an alternative approach to the question. Given two $\R$-normed spaces $X=(\C^n, \|\cdot\|_X)$ and $Y=(\C^d, \|\cdot \|_Y)$ and a linear map $A:X \to Y$, the property that 
$$\|A\|_{X\to Y}\le 1$$
 is equivalent to the property that
$$\forall w \in \C^n, \forall x \in \C^d, \qquad \Re (Aw \cdot x) \le \frac1p \|w\|_X^p + \frac1q \|x\|_{Y^\star}^q$$
where $\frac1p + \frac1q = 1$. 
Assume we are given   $X_s=(\C^n, \|\cdot\|_{X_s})$ and  $Y_s = (\C^d, \|\cdot\|_{Y_s})$  two families  of $\R$-normed spaces  parameterised by $ s\in \pD$. 
For each family we assume that the norms are measurable  and uniformly equivalent in the parameter $s\in \pD$. Assume we are also given a family of $\R$-linear operators $A_t:\C^n \to \C^d$, $t\in \overline\DD$ such that the map $t\to A_t$ is holomorphic on  $\DD$ and continuous on $ \overline\DD$.
Given $p>1$, we perform the $p$-interpolation of the norms $ \{\|\cdot\|_{X_s}\}_{ s\in \pD}$ and  $\{\|\cdot\|_{Y_s}\}_{ s\in \pD}$ and denote accordingly, for $t\in \DD$
$$X_t = (\C^n, \|\cdot\|_{p,X_t}) \AND Y_t = (\C^d, \|\cdot\|_{p,Y_t}).$$
the corresponding $p$-interpolated $\R$-normed spaces. We ask when the bounds
$$\forall s \in \pD, \qquad \|A_s\|_{X_s\to Y_s}\le 1$$
ensure that
$$\forall t \in \DD, \qquad \|A_t \|_{X_t \to Y_t}\le 1?$$
We introduce the $q$-interpolation of the spaces $Y_s^\star = (\C^d, \|\cdot\|_{Y_s}^\star)$,  that we denote by $F_t$ for $t\in \DD$. Our duality theorem recalled above says exactly that
$F_t = Y_t ^\star$.
Now let us fix $t_0\in \DD$ and  $(w,x)\in \C^n\times \C^d$. Let $f\in  \vec{H}^p=H^p(\DD, \C^n)$ and $g\in  \vec{H}^q=H^q(\DD, \C^d)$ be the associated foliations: $f(t_0)= w$, $g(t_0)= x$ and the functions $t\to \|f(t)\|_{p,X_t}^p$ and $t\to \|g(t)\|^q_{F_t}=\|g(t)\|_{q,Y_t^\star}^q$ are  harmonic and in $P[L^1]$. If the function
$$\alpha(t) := \Re A_t f(t) \cdot g(t) - \frac1p \|f(t)\|_{p,X_t}^p - \frac1q \|g(t)\|_{q,Y_t^\star}^q$$
is  harmonic on $\DD$ (and in  $P[L^1]$), then we have a positive answer to the question: if it is nonnegative on the boundary, it will remain nonnegative at $t_0$. However, it the operators $A_t$ are only $\R$ linear, then we cannot conclude in general since the function $$(t,w,x) \to  \Re A_t w \cdot x $$
is no-longer guaranteed to pluri-harmonic, even if $A_t$ does not depend on $t$. In the situations discussed above, when one of the spaces is fixed, it follows that one of the foliations can be taken to be constant, and then we are fine. However, if we assume that the operators $A_t$ are $\C$-linear, then we are also fine, as the function above is indeed pluri-harmonic. So the most general interpolation theorems holds in the case of $\C$-linear operators between our $p$-interpolated $\R$-normed spaces.

Finally, let us mention that that we have discussed the case where the operator norms are bounded by one, but we can extend the result to arbitrary bounds. The precise bound for the operator norms between the interpolated spaces is given by the remark $iv)$ p.\pageref{constants}.

\end{enumerate}

\medskip

To end this section, let us discuss the difference between $\C$-norms and $\R$-norms, and as a consequence show that our results contain the classical results on complex interpolation between (finite) dimensional complex normed spaces.

If we assume that our boundary
values are $\C$-homogenous in $z$, which is the case when we work with $\C$-norms,  the situation described
above is a lot more rigid than in the general case.  While the following proposition
is well-known in the classical theory of interpolation, which  coincides, as we shall see, with our
interpolation method, we decided to include its proof as it provides a different angle. It reveals why harmonic functions along leaves become constant.

\begin{prop} Let $p \in (1, +\infty)$ and let $\phi: \pD \times \CC^n \rightarrow \RR^+$
be a measurable, fiberwise-convex function satisfying $p$-uniform growth conditions.
Assume that the boundary values $\phi(t,z)$ are homogeneous of degree $p>0$ in $z$, i.e.,
$
\phi(t,\lambda z)=|\lambda|^p\phi(t,z)
$
for $\lambda$ in $\C$. Then:
\begin{enumerate}
\item[(i)] $[\phi]$ has the same property.
\item[(ii)] $\log [\phi]$ is plurisubharmonic.
\item[(iii)]
If $h$ is a holomorphic function of one variable such that $t\to [\phi](t, h(t))$
is harmonic on $\DD$, then  $t\to [\phi](t, h(t))$ is constant on $\DD$.
\end{enumerate}
\end{prop}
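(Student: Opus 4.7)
The plan is to establish (i) by a scaling argument in the infimum defining $\check\phi$, (ii) by representing $\log[\phi]$ as a supremum of manifestly PSH functions, and (iii) by combining (ii) with the harmonicity hypothesis.

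For (i), I use the identification $[\phi] = \check\phi$ from Corollary~\ref{cor:eqhulls}. For any $\lambda \in \CC$, the map $f \mapsto \lambda f$ is a bijection of $\vec{H}^p$ sending $\{f : f(t_0) = w_0\}$ onto $\{f : f(t_0) = \lambda w_0\}$, and the $p$-homogeneity of $\phi$ gives $\phi(s, \lambda f(s)) = |\lambda|^p \phi(s, f(s))$ for every $s \in \pD$. Inserting this into the infimum defining $\check\phi(t_0, \lambda w_0)$ yields $|\lambda|^p \check\phi(t_0, w_0)$, proving (i).

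For (ii), the main step, first observe that the $p$-homogeneity of $\phi$ implies $\phi^*_s$ is $\CC$-invariant and $q$-homogeneous for positive real scalars, via the substitutions $w \mapsto e^{-i\theta} w$ and $w \mapsto r^{1/(p-1)} w$ in the Legendre supremum. Starting from the representation $[\phi] = \tphi$ in the form
\[
[\phi](t, w) = \sup_{f \in \vec{H}^q} \bigl\{\Re(f(t) \cdot w) - P[\phi^*_s(f(s))](t)\bigr\},
\]
the substitution $f \mapsto \lambda e^{i\theta} f$ with $\lambda > 0, \theta \in \RR$ is a self-bijection of $\vec{H}^q$ that scales the Poisson term by $\lambda^q$; optimizing over $\theta$ converts $\Re$ into $|\cdot|$, and then optimizing the expression $\lambda |f(t) \cdot w| - \lambda^q P[\phi^*_s(f(s))](t)$ over $\lambda > 0$ gives
\[
[\phi](t, w) = \sup_{f \in \vec{H}^q,\, f \not\equiv 0} \frac{|f(t) \cdot w|^p}{p\, q^{p-1}\, \bigl(P[\phi^*_s(f(s))](t)\bigr)^{p-1}}.
\]
Taking logarithms,
\[
\log [\phi](t, w) = -\log(p q^{p-1}) + \sup_{f} \bigl\{p \log|f(t) \cdot w| - (p-1) \log P[\phi^*_s(f(s))](t)\bigr\}.
\]
Each summand in the supremum is PSH on $\DD \times \CC^n$: the first because $(t, w) \mapsto f(t) \cdot w$ is holomorphic, the second because $P[\phi^*_s(f(s))](t)$ is a positive harmonic function of $t$ (positive for $f \not\equiv 0$), so its negative logarithm is subharmonic by concavity of $\log$. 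Since $[\phi]$ is PSH hence USC, so is $\log[\phi]$; as a USC supremum of PSH functions, it is PSH. The main obstacle here is verifying that the rearranged supremum equals $[\phi]$ exactly rather than merely bounding it, which hinges on $f \mapsto \lambda e^{i\theta} f$ being a self-bijection of $\vec{H}^q$ compatible with the homogeneity of $\phi^*$.

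For (iii), set $g(t) := [\phi](t, h(t))$. The map $t \mapsto (t, h(t))$ is a holomorphic disc in $\DD \times \CC^n$, so by (ii) the pullback $\log g$ is subharmonic on $\DD$. By hypothesis $g$ is harmonic and nonnegative; if $g \equiv 0$ it is trivially constant, otherwise $g > 0$ throughout $\DD$ by the minimum principle for nonnegative harmonic functions, and $\log g$ is also superharmonic by concavity of $\log$ applied to the positive harmonic $g$. Hence $\log g$ is harmonic on $\DD$, and the identity
\[
0 = \Delta \log g = \frac{\Delta g}{g} - \frac{|\nabla g|^2}{g^2} = -\frac{|\nabla g|^2}{g^2}
\]
forces $\nabla g \equiv 0$, so $g$ is constant on $\DD$.
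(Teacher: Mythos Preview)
Your proof is correct. Parts (i) and (iii) are essentially the paper's arguments written out in more detail; the paper simply says (i) is ``obvious from the definition'' and for (iii) states that a harmonic function whose logarithm is subharmonic must be constant, which you justify explicitly via the minimum principle and the Laplacian identity.

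The genuine difference is in (ii). The paper appeals to the classical criterion: a function $\Phi$ on $\CC^N$ is log-PSH as soon as $(\lambda,w)\mapsto |\lambda|^k\Phi(w)$ is PSH on $(\CC\setminus\{0\})\times\CC^N$ for some $k>0$. Since $|\lambda|^p[\phi](t,z)=[\phi](t,\lambda z)$ by (i) and $[\phi]$ is PSH on $\DD\times\CC^n$, this criterion applies immediately. Your route instead exploits the $\tphi$-representation together with the $\CC$-homogeneity of $\phi^\ast_s$: optimizing over the $\CC^*$-orbit of each test function $f$ yields the closed formula
\[
[\phi](t,w)=\sup_{f\not\equiv 0}\frac{|f(t)\cdot w|^p}{p\,q^{p-1}\bigl(P[\phi^\ast_s(f(s))](t)\bigr)^{p-1}},
\]
whose logarithm is visibly a supremum of PSH functions. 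The paper's argument is shorter and uses only that $[\phi]$ is PSH and $\CC$-homogeneous, so it would apply to any such function; yours is tied to the interpolation construction but has the advantage of producing an explicit decomposition of $\log[\phi]$ into PSH pieces, which makes the structure transparent.
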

\begin{proof} The first claim is obvious from the definition.
In order to prove (ii), we use the following classical fact:  a function $\Phi$ on $\C^N$ (below $N=n+1$) is log-plurisubharmonic if there exists $k>0$ such that $|\lambda|^k\Phi(w)$ is plurisubharmonic as a function of $(\lambda, w)\in (\C\setminus\{0\})\times \C^N$.
By definition of plurisubharmonicity, it is enough to check this property when $N=1$, and by approximation, when $\Phi$ is smooth and positive. Then, the complex Hessian of $|\lambda|^k\Phi(z)$ with respect to $(\lambda, z)\in \C^2$
is
$$ \begin{pmatrix}
(\frac{k}2)^2| \lambda|^{k-2}\Phi(z) &\frac{k}2| \lambda|^{k-2}\b\l \partial_z \Phi \\
\frac{k}2| \lambda|^{k-2}\l \partial_{\b z} \Phi (z)&|\l|^k \partial^2_{z\b z}\Phi(z)
\end{pmatrix}, $$
and the determinant of this nonnegative matrix is equal to
$$
\Big(\frac{k}2\Big)^2| \lambda|^{2k-2} \big(\Phi(z)   \partial^2_{z\b z}\Phi(z) -\partial_z \Phi (z)\partial_{\b z} \Phi (z)\big) = 4 \Big(\frac{k}2\Big)^2| \lambda|^{2k-2}  \Phi(z)^2  \Delta \log\Phi(z).
$$

Now, by (i),
$$
|\lambda|^p \cdot [ \phi](t,z)=[\phi](t,\lambda z).
$$
This shows that $ (\lambda, t,z) \to |\lambda|^p \cdot [ \phi](t,z)$ is PSH, and
so, $\log [ \phi]$ is plurisubharmonic by the fact mentioned
earlier, and (ii) is proven. To deduce (iii), observe that if  $[\phi](t, h(t))$ is harmonic it must be constant
since its logarithm is subharmonic by (ii).
\end{proof}

So assume that we are given, as before, a family of norms $\{\|\cdot\|_s\}_{s\in \pD}$, but this time each $\|\cdot\|_s$ is a $\C$-norm on $\C^n$. Let us fix some $p>1$ and define, as above, the $p$-interpolated norms $\|\cdot\|_{p,t}$ at $t\in \DD$. It follows from our main theorem and from the previous proposition that for $t_0\in \DD$ and $w_0\in \C^n$ exists an holomorphic function $F$ with $F(t_0)=w_0$  such that $t\to \|F(t)\|_t^p$ is constant (and in $P[L^1]$), and therefore  we have
$$\|F(t)\|_{p,t}=\|w_0\|_{p, t_0},\qquad  \forall t \in \DD, \text{ and for a.e. } t\in \pD,$$
that is, we reproduce the result~\eqref{trivialfoliation} mentioned in the Introduction. In particular, our function $F$ which was in $\vec{H}^p$ is in fact in $\vec{H}^\infty$. Consequently, we can replace the mean with respect to the harmonic measure by the supremum (for a constant function, it is the same) in the definition of $\check \phi$, and obtain that
$$\|w_0\|_{p,t_0} = \inf\{ {\rm ess} \sup_{s\in \pD} \| F(s)\|_s \; : F \in H^\infty(\DD, \C^n) , \; F(t_0)=w_0 \} =\|w_0\|_{t_0}$$
where $\|\cdot\|_t$ refers to the classical complex interpolation~\eqref{definterpolation}. Therefore, in the case of interpolation of $\C$-norms, all the $p$-interpolation methods coincide with the usual interpolation. In particular, our results (foliations, equality of hulls, duality) apply and allow to reproduce  classical results on complex interpolation. More importantly, we hope this comparison between the situation of $\R$-norms and $\C$-norms sheds new light on complex interpolation itself.

\section*{Acknowledgments}

This work is based on the SQuaREs project award
``Interactions between convex geometry and complex geometry"
from the American Institute of Mathematics (AIM) and NSF.
The authors are grateful to AIM and its staff
for the funding, hospitality, and excellent working conditions
over the years 2011--2013.
This research was
supported by grants from ANR, BSF (2012236),
ERC (305629), NSF (DMS-0802923,1206284,1515703),
VR, and a Sloan Research Fellowship.
DCE wishes to thank Gilles Pisier for illuminating discussions on complex interpolation.
YAR is grateful to R.J. Berman and Chalmers Tekniska H\"ogskola
for their hospitality and support in Spring 2014 when part of this work was carried out.

\def\listing#1#2#3{{\sc #1}:\ {\it #2}, \ #3.}

\bigskip

{\sc Chalmers University of Technology and G\"oteborg University}

{\tt bob@chalmers.se}

\bigskip

{\sc Sorbonne Universit\'e, Institut de Mat\'ematiques de Jussieu}

{\tt dario.cordero@imj-prg.fr}

\bigskip

{\sc Weizmann Institute of Science and Tel-Aviv University}

{\tt boaz.klartag@weizmann.ac.il}

\bigskip

{\sc University of Maryland}

{\tt yanir@umd.edu}

\end{document}